\newtheorem{theo}{Theorem}[section]
\newtheorem{prop}[theo]{Proposition}
\newtheorem{lemm}[theo]{Lemma}
\newtheorem{coro}[theo]{Corollary}
\newtheorem{rema}[theo]{Remark}
\newtheorem{Defi}[theo]{Definition}
\newtheorem{example}[theo]{Example}
\newtheorem{question}[theo]{Question}
\title{Varieties with representable ${\rm CH}_0$-group and a question of Colliot-Th\'{e}l\`{e}ne}
\author{Claire Voisin\footnote{The author is supported by the ERC Synergy Grant HyperK (Grant agreement No. 854361).}}
\date{}
\newfont{\gothic}{eufb10}
\begin{document}

\maketitle
\begin{abstract}  We continue our investigation of the geometry of the Albanese  morphism on  0-cycles. We provide an example of a smooth projective variety with representable ${\rm CH}_0$-group  but with no universal $0$-cycle, which answers a question asked by Colliot-Thélène. Our construction relies on a counterexample to the integral Hodge conjecture provided by Benoist and Ottem.
 \end{abstract}
\section{Introduction}
Let $X$ be  a smooth projective complex variety of dimension $n$. The Albanese morphism
\begin{eqnarray}\label{eqalbmoroh} {\rm alb}_X: X\rightarrow {\rm Alb}(X)\end{eqnarray} is a morphism of algebraic varieties defined using a base-point in $X$. The induced morphism
\begin{eqnarray}\label{eqaX} a_X: {\rm CH}_0(X)_{\rm hom}\rightarrow  {\rm CH}_0({\rm Alb}(X))_{\rm hom}\rightarrow {\rm Alb}(X), \end{eqnarray}
where the last map is the sum map, is  the Griffiths-Abel-Jacobi map for zero-cycles on $X$, and can also be characterized as the universal functorial regular abelian quotient of ${\rm CH}_0(X)_{\rm hom}$ (see \cite{murre}, \cite{serre}). Indeed, the group morphism $a_X$ is surjective and regular, which means that for any smooth projective variety $W$ with base-point $w_0$ and codimension $n$ cycle $\Gamma\in{\rm CH}^n(W\times X)$, the map
$$W\rightarrow {\rm Alb}(X),$$
$$w\mapsto a_X\circ\Gamma_*(w-w_0)$$
is a morphism of algebraic varieties. In \cite{voisinArbarello}, \cite{voisincollino}, we introduced and studied the notion of ``universal $0$-cycle'' (parameterized by the Albanese variety), with the following
\begin{Defi} A universal $0$-cycle for $X$ is a codimension $n$ cycle $\Gamma\in{\rm CH}^n({\rm Alb}(X)\times X)$, where $n:={\rm dim}\,(X)$, such that for any $t\in {\rm Alb}(X)$
$$a_X\circ \Gamma_*(\{t\}-\{0_{{\rm Alb}(X)}\})=t\,\, {\rm in}\,\,{\rm Alb}(X).$$
\end{Defi}
\begin{example}\label{example}{\rm A smooth projective curve admits a universal $0$-cycle given by the Poincar\'{e} divisor on $J(C)\times C $.}
\end{example}
We will review in Section \ref{sec1} some basic facts concerning this definition. Let us just say that there always exists a universal $0$-cycle with $\mathbb{Q}$-coefficients, and that an $X$ not having a universal $0$-cycle provides a counterexample to the integral Hodge conjecture  on ${\rm Alb}(X)\times X$.
Furthermore, we proved in \cite{voisincollino} that  there exist smooth projective varieties not admitting a universal $0$-cycle.

Recall that, following Mumford \cite{mumford} and Roitman \cite{roitman},  $X$ has a representable ${\rm CH}_0$-group if the  morphism $a_X$ of (\ref{eqaX})  is an isomorphism.
The examples provided in \cite{voisincollino} are surfaces whose Albanese variety is isomorphic to the intermediate Jacobian of a rationally connected threefold. These surfaces do not have in general  a representable ${\rm CH}_0$-group (like the surface of lines of  a smooth cubic threefold, they rather tend to be of  general type).
In the paper \cite{colliotTh}, Colliot-Th\'{e}l\`{e}ne asked the following
\begin{question}\label{questionCT}  Does there exist a smooth complex  projective variety $X$ with representable ${\rm CH}_0$ group and not having a universal $0$-cycle?
\end{question}
The purpose of this paper is to present such an example.
\begin{theo}\label{theo} There exists a smooth projective  threefold $X$ such that ${\rm CH}_0(X)$ is representable and $X$ does not admit a universal $0$-cycle.
\end{theo}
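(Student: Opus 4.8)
The plan is to reduce the existence of a universal $0$-cycle, under the hypothesis that ${\rm CH}_0(X)$ is representable, to the algebraicity of a single explicit integral Hodge class on $A\times X$, where $A:=\mathrm{Alb}(X)$, and then to arrange that this class is one of the non-algebraic torsion classes produced by Benoist and Ottem. First I would record the cohomological criterion. If $a_X$ is an isomorphism, then the map $t\mapsto \Gamma_*(\{t\}-\{0\})$ attached to a cycle $\Gamma\in {\rm CH}^n(A\times X)$ is a morphism of abelian varieties $A\to {\rm CH}_0(X)_{\rm hom}=A$, hence is determined by its action on $H_1$; thus $\Gamma$ is a universal $0$-cycle if and only if $\Gamma_*\colon H_1(A,\mathbb{Z})\to H_1(X,\mathbb{Z})$ is the canonical isomorphism inverse to $({\rm alb}_X)_*$. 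This action is governed by the K\"unneth component $[\Gamma]^{1,2n-1}\in H^1(A,\mathbb{Z})\otimes H^{2n-1}(X,\mathbb{Z})$ of the class of $\Gamma$, so a universal cycle exists precisely when the canonical integral Hodge class $\lambda\in H^1(A,\mathbb{Z})\otimes H^{2n-1}(X,\mathbb{Z})$ representing this isomorphism is the $(1,2n-1)$-part of an algebraic class. Since a universal $0$-cycle always exists with $\mathbb{Q}$-coefficients, the obstruction is a torsion class, and its non-algebraicity is exactly a failure of the integral Hodge conjecture on $A\times X$.

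For the construction I would take $X$ to be a threefold fibred over a smooth projective curve $C$, say $f\colon X\to C$, whose fibres are Enriques surfaces. Enriques surfaces $S$ satisfy $p_g=q=0$ and Bloch's conjecture, so that $A_0(S)=0$, while carrying the torsion $H^3(S,\mathbb{Z})_{\rm tors}=\mathbb{Z}/2$ that is responsible, by Benoist--Ottem, for a non-algebraic integral Hodge class on products with curves. Because the fibres have trivial $A_0$ and $q=0$, one gets $A=\mathrm{Alb}(X)=J(C)$ with $\mathrm{alb}_X$ factoring through $f$, and ${\rm CH}_0(X)_{\rm hom}\otimes\mathbb{Q}=J(C)_\mathbb{Q}$ by a Bloch--Srinivas argument; Roitman's theorem then forces the torsion of ${\rm CH}_0(X)_{\rm hom}$ to inject into $J(C)_{\rm tors}$, so no extra torsion survives and $a_X$ is an isomorphism. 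This yields the representability of ${\rm CH}_0(X)$ over $\mathbb{C}$, notwithstanding the $2$-torsion that the generic fibre $X_\eta$ carries in ${\rm CH}_0$ over $\mathbb{C}(C)$.

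It then remains to kill the universal $0$-cycle. The point is that a section, or a degree-one multisection, $\sigma\colon C\to X$ of $f$ would produce one: pushing forward the Poincar\'e cycle $\mathcal{P}\in{\rm CH}^1(J(C)\times C)$ by $\mathrm{id}\times\sigma$ yields $\Gamma\in{\rm CH}^n(J(C)\times X)$ with $a_X\,\Gamma_*(\{t\}-\{0\})=t$, precisely because $\mathrm{alb}_X$ factors through $f$. I would therefore choose the fibration to carry no $0$-cycle of degree $1$ on $X_\eta$ over $\mathbb{C}(C)$ --- twisting the product $S\times C$ by a suitable torsor so as to destroy the obvious section while preserving the fibrewise geometry --- and identify the resulting universal-cycle obstruction, living in $H^1(J(C),\mathbb{Z})\otimes H^{2n-1}(X,\mathbb{Z})_{\rm tors}$, with the Benoist--Ottem class $\alpha\boxtimes\beta$, where $\alpha\in H^3(S,\mathbb{Z})_{\rm tors}$ and $\beta\in H^1(C,\mathbb{Z})$, which under Poincar\'e duality is exactly a class of this type. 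As this class is not algebraic for a very general such configuration, $\lambda$ is not the $(1,2n-1)$-part of any algebraic class and $X$ admits no universal $0$-cycle.

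The main obstacle is the matching step in the last paragraph. The naive product $S\times C$ \emph{does} admit a universal $0$-cycle, namely $\{{\rm pt}\}\times\mathcal{P}$, so producing a counterexample requires a genuinely non-split construction in which the ``relative point'' class is obstructed, and one must verify that the resulting torsion obstruction is precisely the non-algebraic Benoist--Ottem class and not something an auxiliary algebraic cycle could correct. Controlling this torsion defect --- equivalently, showing that the failure of $X_\eta$ to carry a $0$-cycle of degree $1$ over $\mathbb{C}(C)$ is detected by the same unramified, torsion invariant that Benoist and Ottem prove to be non-algebraic --- is the heart of the argument; by contrast the representability of ${\rm CH}_0(X)$ and the cohomological reduction of the first paragraph are comparatively formal.
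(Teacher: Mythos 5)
Your first two paragraphs are sound and close to the paper: the reduction of the existence of a universal $0$-cycle to the algebraicity of the canonical class in $H^1({\rm Alb}(X),\mathbb{Z})\otimes H^{2n-1}(X,\mathbb{Z})$, and the representability argument (Bloch--Srinivas over the base curve plus Roitman to kill torsion in the Albanese kernel) are both essentially what the paper does. The gap is in the third step, which you yourself flag as ``the heart of the argument'' but do not carry out, and the construction you propose there is not shown to exist. First, ``twisting the product $S\times C$ by a suitable torsor'' is not defined: an Enriques surface is not a torsor under a group scheme, so there is no evident twisting datum that destroys the relative point class while keeping the fibres Enriques. The paper's actual variety is the free quotient $X=(E\times S)/(t_\xi,g)$ with $S$ a \emph{K3} double cover of the Enriques surface $\Sigma$; its fibration over the elliptic curve $E_\xi={\rm Alb}(X)$ has K3 fibres (not ${\rm CH}_0$-trivial ones), and representability comes from the antisymplectic involution acting as $-{\rm Id}$ on ${\rm CH}_0(S)_{\rm hom}$, not from triviality of ${\rm CH}_0$ of the fibres.

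Second, and more seriously, arranging that $X_\eta$ has no $0$-cycle of degree $1$ over $\mathbb{C}(C)$ is necessary but not sufficient: the paper exhibits in Section \ref{sec1feberedvar} a surface fibred over a curve with index $d>1$ that nevertheless admits a universal $0$-cycle, so your plan cannot stop at ``destroy the degree-one multisection.'' One must show directly that \emph{no} correspondence $\Gamma\in{\rm CH}_1({\rm Alb}(X)\times X)$ induces the canonical isomorphism on $H_1$. The paper does this by a mod $2$ covering-space computation: the class $\sigma_X\in H^1(E_\xi\times\Sigma,\mathbb{Z}/2)$ of the double cover $X\to E_\xi\times\Sigma$ decomposes as ${\rm pr}_{E_\xi}^*\sigma_{E_\xi}+{\rm pr}_\Sigma^*\sigma_\Sigma$, whence $p_{E_\xi}^*\sigma_{E_\xi}+p_\Sigma^*\sigma_\Sigma=0$ on $X$, and applying $\Gamma^*$ forces $\sigma_{E_\xi}=\Gamma_\Sigma^*(\sigma_\Sigma)$ for some $\Gamma_\Sigma\in{\rm CH}_1(E_\xi\times\Sigma)$; Proposition \ref{propBOwithnewproof} (Benoist--Ottem) rules this out for very general $E_\xi$. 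Note that the obstruction used is this statement about mod $2$ degree-one classes and correspondences from $\Sigma$ to a very general \emph{elliptic} curve (proved via non-isotriviality and the canonical bundle formula), not the algebraicity of a class of the form $\alpha\boxtimes\beta$ with $\alpha\in H^3(\Sigma,\mathbb{Z})_{\rm tors}$ and $\beta\in H^1(C,\mathbb{Z})$ on an arbitrary base curve $C$, so even the target of your ``matching step'' is not quite the right one. Without an explicit $X$ and this reduction, the proposal does not yet prove the theorem.
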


Our examples have some torsion in their integral Betti cohomology, so one might ask whether such examples exist  without torsion in their integral Betti cohomology. From the viewpoint of the integral Hodge conjecture, they also provide an example of a curve class (that is, a Hodge homology class of degree $2$) on a fourfold with torsion canonical bundle, that is not algebraic modulo torsion. Benoist and Ottem construct in \cite{benoistOttem} examples of non-algebraic torsion curve classes on a threefold with torsion canonical bundle.
Our construction  in fact relies on the  results of  \cite{benoistOttem}. In Section \ref{secBO}, we will provide a mild generalization and a short proof of one of their statements (see Proposition \ref{propBOwithnewproof}).

The assumptions of Theorem \ref{theo} are very restrictive geometrically. Indeed, we have
   \begin{lemm} \label{lestructure} Let $X$ be a smooth projective complex variety  with representable ${\rm CH}_0$ group and nontrivial Albanese variety. Then there exist a smooth projective curve $C$ such that $J(C)\cong {\rm Alb}(X)$ and a dominant morphism $\phi_C: X\rightarrow C$ such that the Albanese morphism of $X$ is the composition of $\phi_C$ and the Albanese morphism of $C$.
   \end{lemm}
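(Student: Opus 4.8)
The plan is to distill from representability a single cohomological input, namely $H^0(X,\Omega_X^2)=0$, and then to run a Castelnuovo--de Franchis argument on the space of global holomorphic $1$-forms. \emph{Step 1 (representability $\Rightarrow$ $H^0(X,\Omega_X^2)=0$).} First I would show that representability forces $\mathrm{CH}_0(X)$ to be supported on a curve. Choose a smooth complete intersection curve $C'\subset X$ of sufficiently ample hypersurface sections; by the Lefschetz hyperplane theorem $H_1(C',\mathbb{Z})\to H_1(X,\mathbb{Z})$ is surjective, so the induced map $J(C')\to \mathrm{Alb}(X)$ is surjective. Since for curves the Abel--Jacobi map $\mathrm{CH}_0(C')_{\mathrm{hom}}\xrightarrow{\sim}J(C')$ is an isomorphism and, by hypothesis, $a_X$ is an isomorphism, the push-forward $\mathrm{CH}_0(C')_{\mathrm{hom}}\to \mathrm{CH}_0(X)_{\mathrm{hom}}$ is surjective, whence every $0$-cycle of degree $0$ is rationally equivalent to one supported on $C'$. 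The Bloch--Srinivas decomposition of the diagonal (equivalently Mumford's theorem, cf. \cite{mumford}, \cite{roitman}) then gives $H^0(X,\Omega_X^p)=0$ for all $p\geq 2$; I only need the case $p=2$.

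\emph{Step 2 (isotropy and Castelnuovo--de Franchis).} Since $\mathrm{alb}_X^*:H^0(\mathrm{Alb}(X),\Omega^1)\to H^0(X,\Omega_X^1)$ is an isomorphism, every holomorphic $1$-form on $X$ is pulled back from $\mathrm{Alb}(X)$; and because the wedge of two holomorphic $1$-forms is a holomorphic $2$-form, the vanishing $H^0(X,\Omega_X^2)=0$ shows that $V:=H^0(X,\Omega_X^1)$ is \emph{totally isotropic} for the wedge product. If $q:=\dim V=1$, then $\mathrm{Alb}(X)$ is an elliptic curve $E$, the Albanese morphism is itself dominant onto a curve with $J(E)\cong E\cong\mathrm{Alb}(X)$, and the lemma holds with $C=E$; so assume $q\geq 2$. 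Applying the higher-dimensional Castelnuovo--de Franchis theorem (Catanese) to the isotropic space $V$ yields a fibration $\phi_C:X\to C$ with connected fibres onto a smooth curve $C$ of genus $g(C)\geq\dim V$ such that $V\subseteq\phi_C^*H^0(C,\Omega_C^1)$. Since conversely $\phi_C^*H^0(C,\Omega_C^1)\subseteq H^0(X,\Omega_X^1)=V$ and $\phi_C^*$ is injective, I get $\phi_C^*H^0(C,\Omega_C^1)=V$ and $g(C)=q$; in particular $\phi_C^*$ is an isomorphism on global $1$-forms.

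\emph{Step 3 (identification of the Albanese).} By the universal property of the Albanese, the map $\mathrm{alb}_C\circ\phi_C:X\to J(C)$ factors as $\psi\circ \mathrm{alb}_X$ for a homomorphism $\psi=\phi_{C*}:\mathrm{Alb}(X)\to J(C)$ whose differential at the origin is dual to $\phi_C^*$ on $1$-forms, hence an isomorphism; thus $\psi$ is an isogeny. To upgrade it to an isomorphism I use that $\phi_C$ has connected fibres, so $\phi_{C*}:H_1(X,\mathbb{Z})\to H_1(C,\mathbb{Z})$ is surjective; comparing the period lattices via $\int_{\phi_{C*}\gamma}\alpha=\int_\gamma\phi_C^*\alpha$ shows that $\psi$ carries the lattice of $\mathrm{Alb}(X)$ isomorphically onto that of $J(C)$, so $\psi$ is an isomorphism. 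Identifying $\mathrm{Alb}(X)$ with $J(C)$ through $\psi$ turns the relation $\psi\circ \mathrm{alb}_X=\mathrm{alb}_C\circ\phi_C$ into the asserted factorization of the Albanese morphism of $X$ through $\phi_C$.

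I expect the crux to be Step 2: producing a \emph{single} curve fibration carrying \emph{all} of $H^0(X,\Omega_X^1)$, which is exactly the content of the higher-dimensional Castelnuovo--de Franchis theorem applied to a maximal isotropic subspace. By contrast the supporting points---the reduction to support on a curve in Step 1 and the passage from isogeny to isomorphism in Step 3---are comparatively routine once the fibration is in hand.
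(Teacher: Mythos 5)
Your proof is correct, and it reaches the same conclusion from the same starting point (Mumford/Bloch--Srinivas vanishing $H^0(X,\Omega_X^{\ge 2})=0$) but by a genuinely different route for the key geometric step. The paper does not invoke Castelnuovo--de Franchis at all: it observes that the vanishing of $H^0(X,\Omega_X^2)$ forces the differential of ${\rm alb}_X$ to have rank $\le 1$ everywhere (a nonzero wedge of two pulled-back $1$-forms would give a nonzero holomorphic $2$-form), so the image of the Albanese map is a curve, and the Stein factorization of ${\rm alb}_X$ produces $\phi_C:X\to C$ directly; the identification $J(C)\cong{\rm Alb}(X)$ is then deduced from the universal property of the Albanese for $X$ and $C$. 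Your approach instead feeds the totally isotropic space $V=H^0(X,\Omega_X^1)$ into Catanese's higher-dimensional Castelnuovo--de Franchis theorem, which is a heavier black box but hands you connected fibres, the equality $g(C)=q$, and the fact that $\phi_C^*$ is an isomorphism on $1$-forms, all of which make your Step 3 (isogeny upgraded to isomorphism via surjectivity of $\phi_{C*}$ on $H_1$) clean and explicit. The trade-off is that the paper's argument is more elementary and self-contained, while yours spells out in full the lattice comparison that the paper compresses into the sentence ``using the universal property \dots we conclude.'' Both your preliminary reduction in Step 1 (support of ${\rm CH}_0$ on a complete intersection curve, then Bloch--Srinivas) and your treatment of the degenerate case $q=1$ are fine.
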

\begin{proof} When ${\rm CH}_0(X)$ is representable, Mumford's theorem \cite{mumford} (see also \cite{blochsri} for an alternative proof), tells that $H^0(X,\Omega_X^i)=0$ for $i\geq 2$. It follows that the Albanese morphism
${\rm alb}_X: X\rightarrow {\rm Alb}(X)$ of (\ref{eqalbmoroh}) has rank $\leq 1$ everywhere, hence factors (using the Stein factorization) as the composition of a morphism $\phi_C: X\rightarrow C$ and a morphism $\psi: C\rightarrow {\rm Alb}(X)$. Using the universal property of the Albanese morphism for $X$ and $C$, we conclude that
$\psi$ induces an isomorphism ${\rm Alb}(C)\cong {\rm Alb}(X)$.
\end{proof}
\begin{rema}{\rm As the proof above  shows, the conclusion  of Lemma \ref{lestructure} holds under the asssumption that $h^{i,0}(X)=0$ for $i\geq 2$.}
\end{rema}
The fact that ${\rm Alb}(X)$ is isomorphic to the Jacobian of a curve makes a priori easier for $X$ to have a universal $0$-cycle. Indeed, for a Jacobian $J(C)$, or for any abelian variety which is a direct summand in a Jacobian, we can use  the universal Poincar\'{e} divisor on $J(C)\times C$ and this reduces the problem of constructing a codimension $n$ cycle in ${\rm Alb}(X)\times X$ to the problem of constructing a codimension $n$ cycle in $C\times X$.

We will discuss in Section \ref{sec1} various properties of a smooth projective variety $X$ related to the existence of a universal $0$-cycle (see Proposition \ref{prokununiv}).
 We will relate in particular the existence of a universal $0$-cycle for $X$ with the structure of the K\"{u}nneth projector $\delta_1$ on $X$. Proposition \ref{prokununiv} says that the property of having a universal $0$-cycle has a good formulation in terms of  the motive of $X$ when one knows furthermore  that the Albanese variety of $X$ is a  direct summand in the Jacobian of a curve, which is the case when $X$ has representable ${\rm CH}_0$ by Lemma \ref{lestructure}. Note that, thanks to \cite{degaay} and the recent work \cite{SEggay}, it is now known that an abelian variety is not always a direct summand in the Jacobian of a curve (equivalently by \cite{degaay} and \cite{voisinArbarello},  the integral Hodge conjecture can be wrong for curve classes on abelian varieties).
Motivated by Lemma \ref{lestructure}, we will  consider more generally  in Section \ref{sec1feberedvar} smooth projective varieties $X$ whose Albanese map factors as the composition of a dominant  morphism
$$\phi_X:X\rightarrow C,$$
and the Abel map of $C$, as it is the case when ${\rm CH}_0(X)$ is representable  by Lemma \ref{lestructure}.
Recall that the index of a dominant morphism $X\rightarrow B$, where $B$ is irreducible, is the greatest common divisor of the degrees ${\rm deg}(Z/B)$ for all $Z\subset X$ closed irreducible dominant over $B$ and of dimension equal to ${\rm dim}\,B$. In the situation above, we will discuss the relation between the index of $\phi_X$ being $1$ and the existence of a universal $0$-cycle for $X$ (see Proposition \ref{coroaprouver}).
In one direction, we will prove  that, if the Albanese map of $X$  factors through a morphism
$ \phi_X:X\rightarrow C $ as in  Lemma \ref{lestructure} and $\phi_X$  has index $1$, $X$ has a universal $0$-cycle.
This statement has no converse, as
we will also exhibit an example where the index of $\phi_X$ is not equal to $1$, but $X$ has a universal $0$-cycle.

We will prove Theorem \ref{theo} in Section \ref{secexCT}. In Section \ref{secsurfcaes}, we will discuss the case of surfaces with representable ${\rm CH}_0$ group, where more can be said thanks to the structure of surfaces with $q\not=0$, $p_g=0$. Such surfaces $X$ are quotients of products of curves by a finite group, and we prove that the index of  $\phi_X$  is $1$ in the case of a cyclic group.

\vspace{0.5cm}

{\bf Thanks.} {\it I thank Jean-Louis Colliot-Thélène for his remarks and careful reading. I also thank the referee for his/her  useful comments and corrections.}

\section{A discussion of universal $0$-cycles \label{sec1}}

Let $X$ be a smooth projective variety of dimension $n$ over the complex numbers. The Albanese variety of $X$ is an abelian variety, which as a complex torus is constructed as the quotient $${\rm Alb}(X)=H^{n}(X,\Omega_X^{n-1})/H^{2n-1}(X,\mathbb{Z})_{\rm tf}=H^{0}(X,\Omega_X)^*/H_1(X,\mathbb{Z})_{\rm tf},$$
where in both expressions, ``tf'' means ``modulo torsion''.
It follows that the Albanese map induces by construction an isomorphism of lattices
\begin{eqnarray}\label{eqcohiso} H_1(X,\mathbb{Z})_{\rm tf}\cong H_1({\rm Alb}(X),\mathbb{Z}).
\end{eqnarray}
Here all the cohomology groups are integral Betti cohomology groups of the corresponding complex manifolds.
The isomorphism (\ref{eqcohiso}) has an inverse in ${\rm Hom}(H_1({\rm Alb}(X),\mathbb{Z}),H_1(X,\mathbb{Z})_{\rm tf}) $ which provides an element $\gamma_X\in H^1({\rm Alb}(X),\mathbb{Z})\otimes H^{2n-1}(X,\mathbb{Z})_{\rm tf}$.
As the isomorphism (\ref{eqcohiso}) is an isomorphism of Hodge structures, the class $\gamma_X$ provides by K\"{u}nneth decomposition  a Hodge class of degree $2n$ on ${\rm Alb}(X)\times X$, also denoted $\gamma_X$. The existence of a universal $0$-cycle on $X$ is equivalent to the fact that $\gamma_X$ is algebraic, or rather that there exists an algebraic cycle $\Gamma$ of codimension $n$ in ${\rm Alb}(X)\times X$, such that the K\"{u}nneth component of type $(1,2n-1)$ of $[\Gamma]$ equals $\gamma_X$. In particular, if a smooth projective variety $X$ has no universal $0$-cycle, then the class $\gamma_X$ on  ${\rm Alb}(X)\times X$  provides a counterexample to the integral Hodge conjecture. More precisely, it provides a counterexample to the integral  Hodge conjecture modulo torsion, since the class $\gamma_X$ is well-defined in $H^{2n}({\rm Alb}(X)\times X,\mathbb{Z})_{\rm tf}$ and is not algebraic there when $X$ has no universal $0$-cycle.

If $X,\,Y$ are smooth projective complex varieties, with ${\rm dim}\,X=n$, and $Z\in{\rm CH}^n(Y\times X)$, the morphism of Hodge structures
$$[Z]_*: H_1(Y,\mathbb{Z})_{\rm tf}\rightarrow H_1(X,\mathbb{Z})_{\rm tf}$$
induces a morphism of complex tori (which are in fact abelian varieties) that we will also denote
$$[Z]_*:{\rm Alb}(Y)\rightarrow {\rm Alb}(X).$$

We will use the following construction.
\begin{lemm} \label{lerefsuggestion} Let $X$ be  a smooth projective variety and $C$ a smooth curve with a morphism
$j: C\rightarrow X$ inducing
$$j_*:J(C)\rightarrow {\rm Alb}(X).$$
Assume there exists a morphism of abelian varieties $s:{\rm Alb}(X)\rightarrow J(C)$ such that
\begin{eqnarray} \label{eqjstar1}s\circ j_*= k{\rm Id}_{{\rm Alb}(X)}\end{eqnarray} for some integer $k$. Then there exists a codimension $n$ cycle
$$\Gamma\in{\rm CH}^n({\rm Alb}(X)\times X)$$
such that
$$[\Gamma]_*=k{\rm Id}_{{\rm Alb}(X)}:{\rm Alb}({\rm Alb}(X))={\rm Alb}(X)\rightarrow {\rm Alb}(X).$$
\end{lemm}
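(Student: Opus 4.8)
The plan is to transport the Poincar\'e divisor of $C$ across the two maps $s$ and $j$. By Example \ref{example}, the curve $C$ carries a universal $0$-cycle, that is, a divisor $\mathcal{P}\in{\rm CH}^1(J(C)\times C)$ whose associated map on Albanese varieties $[\mathcal{P}]_*\colon {\rm Alb}(J(C))=J(C)\to {\rm Alb}(C)=J(C)$ is the identity (this is exactly the defining property $a_C\circ\mathcal{P}_*(\{t\}-\{0\})=t$). I would build $\Gamma$ out of $\mathcal{P}$ in two elementary steps, recording at each stage the effect on Albanese varieties through the functoriality of the correspondence action $Z\mapsto [Z]_*$.

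First I would pull $\mathcal{P}$ back along $s\times{\rm id}_C\colon {\rm Alb}(X)\times C\to J(C)\times C$ (a pullback of a line bundle, hence defined for any morphism of the smooth varieties involved), setting
$$\mathcal{P}':=(s\times{\rm id}_C)^*\mathcal{P}\in{\rm CH}^1({\rm Alb}(X)\times C).$$
Viewing $\mathcal{P}'$ as the composition of correspondences $\mathcal{P}\circ\Gamma_s$, where $\Gamma_s\subset {\rm Alb}(X)\times J(C)$ is the graph of $s$, functoriality gives $[\mathcal{P}']_*=[\mathcal{P}]_*\circ{\rm Alb}(s)=s\colon {\rm Alb}(X)\to J(C)$, since $[\mathcal{P}]_*={\rm Id}$ and ${\rm Alb}(s)=s$ for a morphism of abelian varieties. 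Next I would push forward along ${\rm id}\times j\colon {\rm Alb}(X)\times C\to {\rm Alb}(X)\times X$, setting
$$\Gamma:=({\rm id}\times j)_*\mathcal{P}'\in{\rm CH}_{\dim{\rm Alb}(X)}({\rm Alb}(X)\times X)={\rm CH}^n({\rm Alb}(X)\times X),$$
the codimension being $n$ because $\mathcal{P}'$ has dimension $\dim{\rm Alb}(X)$. Identifying $\Gamma$ with the composition of correspondences $\Gamma_j\circ\mathcal{P}\circ\Gamma_s$, where $\Gamma_j\subset C\times X$ is the graph of $j$, the composition rule yields $[\Gamma]_*=j_*\circ[\mathcal{P}']_*=j_*\circ s$.

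It then remains to read off the effect on Albanese: the composite in question is the endomorphism ${\rm Alb}(X)\xrightarrow{s}J(C)\xrightarrow{j_*}{\rm Alb}(X)$, which by hypothesis (\ref{eqjstar1}) equals $k\,{\rm Id}_{{\rm Alb}(X)}$, so $[\Gamma]_*=k\,{\rm Id}_{{\rm Alb}(X)}$ as required. The computation is essentially bookkeeping once the right cycle is written down; the point requiring care is the functoriality of the Albanese (equivalently $H_1$) action under the two operations used — pullback along the graph of the morphism of abelian varieties $s$ precomposes the action with ${\rm Alb}(s)=s$, and pushforward along the graph of $j$ postcomposes it with $j_*$ — which is cleanest to phrase by recognizing $\Gamma=\Gamma_j\circ\mathcal{P}\circ\Gamma_s$ as a composition of correspondences and invoking the compatibility of correspondence composition with the action on $H_1$. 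One should also keep in mind that the relevant composite matching the conclusion ${\rm Id}_{{\rm Alb}(X)}$ is the one landing in ${\rm Alb}(X)$, namely $j_*\circ s$.
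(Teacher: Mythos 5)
Your proof is correct and is essentially the paper's own argument: both transport the Poincar\'e divisor of $C$ to ${\rm Alb}(X)\times X$ using $s$ and $j$, and read off the induced map on $H_1$ (equivalently on Albanese varieties) as $j_*\circ s=k\,{\rm Id}_{{\rm Alb}(X)}$ by functoriality of correspondences. The only immaterial difference is the order of operations --- the paper forms $(\overline{s},{\rm Id}_X)^*\bigl(({\rm Id}_{J(C)},j)_*(\Gamma_C)\bigr)$, pushing forward first and then pulling back, whereas you pull back along $s\times{\rm id}_C$ first and then push forward along ${\rm id}\times j$; both realize the same composition of correspondences on $H_1$.
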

\begin{proof}
The curve $C$ admits a universal $0$-cycle $\Gamma_C\in {\rm CH}^1(J(C)\times C)$ (see Example \ref{example}). Then
the codimension $n$-cycle
$$\Gamma_X:=(\overline{s},{\rm Id}_X)^*(({\rm Id}_{J(C)},j)_*(\Gamma_C))\in{\rm CH}^n({\rm Alb}(X)\times X)$$
 has by (\ref{eqjstar1}) the property that
$$ [\Gamma_X]_*=k {\rm Id}_{{\rm Alb}(X)}:{\rm Alb}({\rm Alb}(X))={\rm Alb}(X)\rightarrow {\rm Alb}(X).$$
\end{proof}
As a first application, let us prove that any smooth projective variety $X$ has a universal $0$-cycle with rational coefficients. Indeed, choosing a smooth  curve $C$ which is a complete intersection of ample hypersurfaces in $X$, and denoting $j: C\rightarrow X$ the inclusion map,
$$j_*:H_1(C,\mathbb{Z})\rightarrow H_1(X,\mathbb{Z})_{\rm tf}$$
is a surjective morphism of Hodge structures. By semisimplicity of polarized weight $1$ Hodge structures,
there exists a morphism of Hodge structures
$$ s: H_1({\rm Alb}(X),\mathbb{Z})\cong  H_1(X,\mathbb{Z})_{\rm tf}\rightarrow H_1(C,\mathbb{Z})$$
such that \begin{eqnarray} \label{eqjstar} j_*\circ s=k{\rm Id}_{H_1(X,\mathbb{Z})_{\rm tf}}\end{eqnarray}
for some nonzero integer $k$. The morphism $s$ induces a morphism
$$\overline{s}: {\rm Alb}(X)\rightarrow J(C)$$
of abelian varieties and $j_*,\,\overline{s}$ satisfy the assumptions of Lemma \ref{lerefsuggestion}. Hence Lemma
 \ref{lerefsuggestion} provides a universal $0$-cycle with rational coefficients.

let us now discuss the link between  universal $0$-cycle and Künneth decomposition of the Albanese motive. Assume that the    smooth projective complex variety $X$ of dimension $n$ has the property  that $H^*(X,\mathbb{Z})$ has no torsion. The K\"{u}nneth components $\delta_i\in H^i(X,\mathbb{Z})\otimes H^{2n-i}(X,\mathbb{Z})$ of the diagonal of $X$ are then well defined, and it is known that $\delta_1$ is algebraic (see \cite{murredec}), at least with $\mathbb{Q}$-coefficients.
\begin{prop} \label{prokununiv}The following statements are equivalent:

(i)  The K\"{u}nneth projector $\delta_1$ is algebraic on $X\times X$ and is the class of a cycle supported on $X\times C$ for some curve $C\subset X$.

(ii) $X$  has a universal $0$-cycle and the Albanese variety of $X$ is a direct summand in the Jacobian of a curve.

(iii) The motive of $X$ contains a direct summand $M_1$, with the property that $M_1$ is a direct summand in the motive of a curve and ${\rm Alb}(X)={\rm Alb}(M_1)$.

\end{prop}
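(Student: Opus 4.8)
The plan is to prove the three statements equivalent by establishing the cyclic chain (i) $\Rightarrow$ (ii) $\Rightarrow$ (iii) $\Rightarrow$ (i). Throughout I write $A:={\rm Alb}(X)$, I freely pass from a curve to its normalization, and I use the identification ${\rm Alb}=J$ for curves. The two recurring inputs will be Lemma \ref{lerefsuggestion} (which turns a splitting of an Albanese surjection into an actual cycle) and the standard theory of the motive $h^1$ of abelian varieties: the assignment $B\mapsto h^1(B)$ is additive, the realization ${\rm End}(h^1(B))\to {\rm End}(H^1(B))$ is faithful, and $h^1(J(C))\cong h^1(C)$ is a direct summand of the motive $h(C)$ of the curve (Deninger--Murre, Scholl, K\"unnemann). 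I track coefficients only lightly, the point being that under the torsion-freeness hypothesis these constructions can be carried out integrally.

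For (i) $\Rightarrow$ (ii): if $Z\in {\rm CH}^n(X\times X)$ represents $\delta_1$ and is supported on $X\times C$, I write $Z=\Gamma_j\circ Z'$, where $j:\widetilde C\to X$ is the normalized inclusion of $C$ and $Z'\in{\rm CH}^1(X\times \widetilde C)$ is a divisorial correspondence. Since $\dim \widetilde C=1$, the cycle $Z'$ induces a morphism $s:=Z'_*:A\to J(\widetilde C)$, while $j$ induces $j_*:J(\widetilde C)\to A$; and because $[Z]=\delta_1$ acts as the identity on $H_1(X)$ one gets $j_*\circ s={\rm Id}_A$. This already exhibits $A$ as a direct summand of $J(\widetilde C)$, and feeding $j$ and $s$ into Lemma \ref{lerefsuggestion} with $k=1$ produces a cycle $\Gamma\in{\rm CH}^n(A\times X)$ with $[\Gamma]_*={\rm Id}_A$, i.e. a universal $0$-cycle. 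Hence (ii) holds.

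For (ii) $\Rightarrow$ (iii): from $A$ being a direct summand of $J(C)$ and the functoriality of $h^1$ I get that $M_1:=h^1(A)$ is a direct summand of $h^1(J(C))\cong h^1(C)$, hence of the motive of $C$, with ${\rm Alb}(M_1)=A={\rm Alb}(X)$ by construction. It remains to realize $M_1$ as a direct summand of $h(X)$, and the key observation is that although the universal $0$-cycle $\Gamma\in{\rm CH}^n(A\times X)$ has the wrong codimension to be a degree $0$ morphism $h(A)\to h(X)$, its transpose ${}^t\Gamma\in{\rm CH}^n(X\times A)$ \emph{is} a degree $0$ morphism $h(X)\to h(A)$ precisely because $\dim X=n$; likewise the transposed graph of the Albanese map gives a degree $0$ morphism $\iota:h(A)\to h(X)$ realizing $a_X^*$ on $H^1$. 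Composing with the projector of $h(A)$ onto $h^1(A)$ I obtain $\iota:M_1\to h(X)$ and $\pi:h(X)\to M_1$ whose composite $\pi\circ\iota$ acts as the identity on $H^1(A)$ — this is exactly the assertion that the universal $0$-cycle is inverse to the Albanese on $H^1$. Faithfulness of the realization on $h^1(A)$ then upgrades this to $\pi\circ\iota={\rm Id}_{M_1}$, so $M_1$ is a direct summand of $h(X)$ and (iii) holds.

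For (iii) $\Rightarrow$ (i): after splitting off the (at most rank one) weight $0$ and weight $2$ pieces coming from $h^0(C)$ and $h^2(C)$, I may assume $M_1=h^1(A)$, so the projector $p\in{\rm CH}^n(X\times X)$ onto $M_1$ has class the K\"unneth projector onto $H^1(X)$. I then factor $p$ through the curve: writing $p=v\circ\alpha$ with $\alpha\in {\rm Hom}(h(X),h(C))={\rm CH}^n(X\times C)$ and $v\in{\rm Hom}(h(C),h(X))={\rm CH}^1(C\times X)$, the correspondence $\alpha$ is a $1$-cycle, so its image in $X$ is a genuine curve $C'\subset X$; one checks that the transpose ${}^tp$ is supported on $X\times C'$ and represents $\delta_1\in H^1(X)\otimes H^{2n-1}(X)$, giving (i). \emph{The main obstacle} is exactly this last step: the representative of $\delta_1$ must be simultaneously the correct K\"unneth component (which is what forces the weight $1$ normalization of $M_1$ and the faithfulness input above) and supported on a curve (which is what the factorization through $h(C)$ with $\alpha$ a $1$-cycle delivers), and keeping the transpose bookkeeping together with the integrality of all the idempotents consistent is where the real care is needed.
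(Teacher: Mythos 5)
Your proof is essentially correct and reaches the same cycle-theoretic mechanism as the paper, but it is organized differently. Your (i)\,$\Rightarrow$\,(ii) is identical to the paper's: desingularize $C$, read off the splitting $j_*\circ s={\rm Id}_{{\rm Alb}(X)}$ from $[Z]=\delta_1$ acting as the identity on $H_1$, and feed it into Lemma~\ref{lerefsuggestion} with $k=1$. Where you diverge is that you close the loop via (ii)\,$\Rightarrow$\,(iii)\,$\Rightarrow$\,(i) inside the formalism of Chow motives ($h^1$ of abelian varieties, the projector $p_1^A$, faithfulness of the realization ${\rm End}(h^1(A))\to{\rm End}(H^1(A))$), whereas the paper proves (ii)\,$\Rightarrow$\,(i) directly by exhibiting the two correspondences $\Gamma_1=\Gamma\circ\pi\circ i_D\in{\rm CH}^n(D\times X)$ and $\Gamma_2=P_D\circ\sigma\circ{\rm alb}_X\in{\rm CH}_n(X\times D)$ and checking that $\Gamma_1\circ\Gamma_2$ is supported on $X\times D'$ with class $\delta_1$, and then treats (ii)\,$\Leftrightarrow$\,(iii) as a reformulation of that construction. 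Your factorization $p=v\circ\alpha$ through $h(C)$ in (iii)\,$\Rightarrow$\,(i), with $\alpha$ a $1$-cycle in $X\times C$ forcing the support condition, is exactly the paper's $\Gamma_1\circ\Gamma_2$ in motive language; your observation that the universal $0$-cycle enters only through its transpose ${}^t\Gamma\in{\rm CH}^n(X\times A)$, which \emph{is} a degree $0$ morphism $h(X)\to h(A)$, is correct and is a clean way to see why the direction of the correspondence matters. What your route buys is a uniform conceptual statement; what the paper's route buys is that every cycle is written down explicitly, which matters for the one point below.

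The one caveat you must repair: the proposition is only interesting with integral coefficients (the paper shows every $X$ has a universal $0$-cycle with $\mathbb{Q}$-coefficients), and the theory you invoke for $h^1(A)$ of an abelian variety --- the Chow--K\"unneth projector $p_1^A$ and the identification ${\rm End}(h^1(A))\cong{\rm End}(A)\otimes\mathbb{Q}$ of Deninger--Murre, Scholl and K\"unnemann --- is a priori a $\mathbb{Q}$-coefficient theory; your remark that ``under the torsion-freeness hypothesis these constructions can be carried out integrally'' is not automatic for $p_1^A$. The fix is to never form $h^1(A)$ directly: work with the curve $C$ (or $D$), whose projector $\pi_1^C=\Delta_C-C\times c-c\times C$ is integral, and cut out the summand corresponding to ${\rm Alb}(X)$ using the Poincar\'e divisor $P_C$ together with the splitting $\sigma,\pi$ of $J(C)\to{\rm Alb}(X)$; faithfulness is then Weil's classical statement that a divisorial correspondence between curves is determined, modulo degenerate ones, by the induced homomorphism of Jacobians, which holds integrally. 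This is precisely what the paper's explicit $\Gamma_1,\Gamma_2$ accomplish, so your argument becomes fully rigorous once $h^1(A)$ is systematically replaced by this integral summand of $h(C)$.
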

In the statement above, we allow nonconnected curves, so ``Jacobian of a curve" stands for ``product of Jacobians of curves".
\begin{rema}{\rm If $X$ has representable ${\rm CH}_0$ group, then it is automatic that the Albanese variety of $X$ is a direct summand in the Jacobian of a curve by Lemma \ref{lestructure}, so in this case we see that $X$ has a universal $0$-cycle if and only if the motive of $X$ is the direct sum of a motive with  ${\rm CH}_0=0$ and a motive which is a direct summand in the motive of a curve.}
\end{rema}

\begin{proof}[Proof of Proposition \ref{prokununiv}] Assume there exist a  curve $C\subset X$ and a codimension $1$ cycle $Z\subset X\times C$ such that the class of $Z$ in $X$ is $\delta_1$. We can replace by desingularization  $C$ by a smooth curve $\widetilde{C}$ with a morphism $\tilde{j}:\widetilde{C}\rightarrow X$, such that $Z$ lifts to a codimension $1$ cycle $\widetilde{Z}\subset X\times C$. Then, as the class of $Z$ in $X$ is $\delta_{1}$, we have for any $\alpha\in H_1(X,\mathbb{Z})$
\begin{eqnarray}\label{eqpourpreubeproofdirect} \alpha=\delta_{1*}(\alpha)=\tilde{j}_*([\widetilde{Z}]_*(\alpha))\,\,{\rm in}\,\,H_1(X,\mathbb{Z}).
\end{eqnarray}
It follows from (\ref{eqpourpreubeproofdirect}) that, via $\tilde{j}_*:J(\widetilde{C})\rightarrow {\rm Alb}(X)$, ${\rm Alb}(X)$ is a direct summand of $J(\widetilde{C})$, with right  inverse given by $ \sigma:=[\widetilde{Z}]_*:{\rm Alb}(X)\rightarrow J(\widetilde{C})$, which proves the second statement in (ii).
Next, we apply Lemma \ref{lerefsuggestion} to $\tilde{j}$ and $\sigma$ and consider
 the cycle
$$ \Gamma_1:= (\sigma,{\rm Id}_C)^*\circ({\rm Id}_{J(C)},\tilde{j})_*(P_{\widetilde{C}})\in {\rm CH}^n({\rm Alb}(X)\times X ).$$
It has the property that $[\Gamma_{1}]_*:{\rm Alb}({\rm Alb}(X))={\rm Alb}(X)\rightarrow {\rm Alb}(X)$ is the identity, since $\tilde{j}_*\circ \sigma={\rm Id}_{{\rm Alb}(X)}$. Hence $X$ has a universal $0$-cycle. Thus (i) implies (ii).

 In the other direction, let $\Gamma\in {\rm CH}^n( {\rm Alb}(X)\times X)$ be a universal $0$-cycle for $X$. This means that
$$[\Gamma]_*:H_1({\rm Alb}(X),\mathbb{Z})\rightarrow  H_1(X,\mathbb{Z})$$
is the natural isomorphism. Let $D$ be a (nonnecessarily connected) smooth projective  curve such that ${\rm Alb}(X)$   is a direct summand of $J(D)$, and let $$\sigma: {\rm Alb}(X)\rightarrow J(D),\,\pi:J(D)\rightarrow {\rm Alb}(X)$$ be such that $\pi\circ \sigma={\rm Id}_{{\rm Alb}(X)}$. Finally, let $i_D: D\rightarrow J(D)$ be the natural embedding and
consider the cycle
$$\Gamma_1:=    \Gamma\circ \pi\circ i_D\in{\rm CH}^n(D\times X).$$
We have $$[\Gamma_1]_*=[\Gamma]_*\circ \pi_*\circ i_{D*}=\pi: J(D)\rightarrow {\rm Alb}(X).$$
 Next, let $P_D\in {\rm CH}^1( J(D)\times D)$ be a universal $0$-cycle (or Poincar\'{e} divisor) for $D$ and consider next the cycle
$$\Gamma_2=P_D\circ \sigma \circ {\rm alb}_X\in {\rm CH}_n(X\times D).$$
We have
$[\Gamma_2]_*=\sigma:{\rm Alb}(X)\rightarrow J(D)$. It follows that
$\Gamma_X:=\Gamma_1\circ \Gamma_2\in {\rm CH}^n(X\times X)$
has the property that
\begin{eqnarray}\label{eqinversefacteur} [\Gamma_X]_*=[\Gamma_1]_*\circ [\Gamma_2]_*=\pi\circ \sigma={\rm Id}_{\rm Alb}(X).\end{eqnarray}
Finally $\Gamma_X$ is supported on $X\times D'$ where $D'\subset X$ is the curve in $X$ defined as ${\rm pr}_2({\rm Supp}(\Gamma_1)$. Furthermore one easily checks that $[\Gamma_X]^*=0$ on $H^i(X,\mathbb{Z})$ for $i\not =1$. It follows that $[\Gamma_X]=\delta_1$.
This proves that (ii) implies (i).

The equivalence of  (ii) with property (iii) is  now clear. If we have a direct summand $M_1$ as in (iii), then $M_1$ has a universal $0$-cycle, being a direct summand in the motive of a curve $C$, and its Albanese variety is a direct summand of $J(C)$. Hence (ii) holds. Conversely, if (ii) holds, then we construct $\Gamma_1$ and $\Gamma_2$ as above and (\ref{eqinversefacteur}) exactly says that $[\Gamma_2]$ realizes motivically  $H_1(X,\mathbb{Z})$ as a direct summand in $H_1(D,\mathbb{Z})$.
\end{proof}

\subsection{Previous examples of smooth projective varieties with no universal $0$-cycle}
The first example of a smooth projective surface not admitting a universal $0$-cycle is constructed in \cite{voisincollino}. It builds on the fact that there exist rationally connected $3$-folds $Y$ with no universal codimension $2$ cycle parameterized by the intermediate Jacobian $J(Y)$ (such examples are constructed in \cite{voisinINVENT}). Here a universal codimension $2$ cycle $\mathcal{Z}_{\rm univ}\in{\rm CH}^2(J(Y)\times Y)$ is characterized by the condition that the composite map
$$J(Y)\rightarrow {\rm CH}^2(Y)_{\rm alg}\rightarrow  J(Y),$$
$$t\mapsto \psi_Y(\mathcal{Z}_{{\rm univ},t}- \mathcal{Z}_{{\rm univ},0_{J(Y)}})$$
is the identity, where $\psi_Y$ is the Abel-Jacobi morphism (an isomorphism in this case) of $Y$.
The second ingredient is the following result proved in \cite{voisincollino}:
\begin{theo} \label{theocollino} Given a smooth projective  rationally connected threefold $Y$ over $\mathbb{C}$, there exist a smooth projective surface $S$ and a codimension $2$ cycle $\mathcal{Z}\in  {\rm CH}^2(S\times Y)$ such that the induced Abel-Jacobi map
$$[\mathcal{Z}]_*: {\rm Alb}(S)\rightarrow J(Y)$$
is an isomorphism.
\end{theo}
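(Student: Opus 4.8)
The plan is to produce $S$ and $\mathcal Z$ by combining a decomposition of the diagonal of $Y$ with the transport mechanism of Lemma \ref{lerefsuggestion}. Since $Y$ is rationally connected we have $\mathrm{CH}_0(Y)=\mathbb Z$ and $H^0(Y,\Omega_Y^3)=0$, so that $J(Y)$ is an abelian variety and the datum of $[\mathcal Z]_*\colon\mathrm{Alb}(S)\to J(Y)$ is precisely the morphism of Hodge structures $H^1(S)\to H^3(Y)$ carried by the K\"unneth component of $[\mathcal Z]$ in $H^1(S)\otimes H^3(Y)$. The goal is to realize an isomorphism of this shape by an algebraic class.

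First I would apply the Bloch--Srinivas decomposition of the diagonal \cite{blochsri}: since $\mathrm{CH}_0(Y)$ is supported on a point, there are an integer $N>0$ and a decomposition $N\Delta_Y=Z_1+Z_2$ in $\mathrm{CH}^3(Y\times Y)$ with $Z_1$ supported on $Y\times\{pt\}$ and $Z_2$ supported on $D\times Y$ for some surface $D\subset Y$. On $H^3(Y)$ the first term acts by zero, so $[Z_2]_*=N\,\mathrm{id}$ there. Desingularizing $D$ by $\tau\colon\widetilde D\to D\subset Y$ and lifting, $Z_2$ yields a cycle $\mathcal Z_0\in\mathrm{CH}^2(\widetilde D\times Y)$ with $[\mathcal Z_0]_*\circ\tau^*=N\,\mathrm{id}$ on $H^3(Y)$. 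In Hodge-theoretic terms this exhibits $J(Y)$ as a direct summand, \emph{up to isogeny}, of $\mathrm{Alb}(\widetilde D)$, with both the projection $\alpha_0=[\mathcal Z_0]_*$ and a section realized by algebraic correspondences.

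Next I would shrink the Albanese by passing to a curve. Cutting $\widetilde D$ by a smooth ample complete-intersection curve $C$, the weak Lefschetz theorem gives $H^1(\widetilde D)\hookrightarrow H^1(C)$, so $\mathcal Z_C:=\mathcal Z_0|_{C\times Y}$ still induces a surjection $\alpha\colon J(C)\to J(Y)$ admitting a section $\beta$ up to isogeny. To obtain a surface whose Albanese is \emph{exactly} $J(Y)$, I take $S$ to be a smooth ample complete-intersection surface in $J(Y)$; by weak Lefschetz $H^1(J(Y))\cong H^1(S)$, so the inclusion induces an isomorphism $\iota_*\colon\mathrm{Alb}(S)\xrightarrow{\ \sim\ }J(Y)$. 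Finally I would assemble $\mathcal Z$ from these pieces using the Poincar\'e divisor $P_C$ of $C$ (Example \ref{example}) exactly as in Lemma \ref{lerefsuggestion}, composing the correspondences $S\hookrightarrow J(Y)\xrightarrow{\,\beta\,}J(C)\xrightarrow{\,P_C\,}C\xrightarrow{\,\mathcal Z_C\,}Y$ to get $\mathcal Z\in\mathrm{CH}^2(S\times Y)$. Since $[P_C]_*=\mathrm{id}$ on $J(C)$, the induced map is $[\mathcal Z]_*=\alpha\circ\beta\circ\iota_*$.

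The main obstacle is that this computation only gives $[\mathcal Z]_*=\alpha\beta\circ\iota_*$, and $\alpha\beta$ is multiplication by $N$ rather than the identity: the construction produces an \emph{isogeny} onto $J(Y)$, not an isomorphism. Upgrading to an honest isomorphism amounts to replacing $\beta$ by a genuine section of $\alpha$, that is, to realizing $J(Y)$ as an integral direct factor of the Jacobian $J(C)$ (equivalently, to the algebraicity of the integral Hodge class $\iota_*$ itself). One cannot take $N=1$ for free: this would be close to an integral decomposition of the diagonal of $Y$, i.e. exactly the kind of universal-cycle statement that may fail. The delicate point, which I expect to require the real work, is therefore to choose $C$ and the correspondences so that the surjection $\alpha$ splits integrally; I would try to arrange this by exploiting the polarizations to split off the orthogonal complement of the image of $\beta$ inside a Jacobian coming from a Lefschetz pencil on $Y$, so that the resulting section satisfies $\alpha\beta=\mathrm{id}$ and hence $[\mathcal Z]_*=\iota_*$ is an isomorphism.
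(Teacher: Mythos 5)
The paper does not actually prove Theorem \ref{theocollino}: it quotes it from \cite{voisincollino}. So I am comparing your proposal with the argument of that reference.

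Your construction is sound as far as it goes, but, as you yourself note in the last paragraph, it only produces a cycle $\mathcal{Z}$ with $[\mathcal{Z}]_*=N\,\iota_*$ for some integer $N>0$, i.e.\ an isogeny rather than an isomorphism, and that residual integer is not a loose end to be tidied up: it is the entire content of the theorem. Every tool you invoke --- the Bloch--Srinivas decomposition of the diagonal, the semisimplicity of polarized weight-one Hodge structures furnishing the section $\beta$ --- is intrinsically a $\mathbb{Q}$-coefficient statement; the paper itself observes, right after Lemma \ref{lerefsuggestion}, that the analogous rational statement holds trivially for every smooth projective variety. The repair you sketch (``exploiting the polarizations to split off the orthogonal complement of the image of $\beta$'') cannot close the gap: the orthogonal complement of a sub-Hodge structure with respect to a polarization is a complement over $\mathbb{Q}$ but in general not over $\mathbb{Z}$ --- a sub-abelian variety of a Jacobian need not be a direct factor, and by \cite{degaay} an abelian variety need not be a direct summand of any Jacobian at all. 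Moreover, even an integral splitting of $\alpha:J(C)\to J(Y)$ as abelian varieties would not suffice, because what must act as the identity on $H^3(Y,\mathbb{Z})$ is the algebraic correspondence $\mathcal{Z}_C$ itself, and Bloch--Srinivas only gives you $N\cdot\mathrm{id}$ there; you cannot divide by $N$.

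The proof in \cite{voisincollino} proceeds by a genuinely different route that produces integral information from the start. Building on the integral Hodge conjecture for $1$-cycles on rationally connected threefolds and the results of \cite{voisinJAG}, together with the Graber--Harris--Starr theorem \cite{ghs}, one constructs a family of curves in $Y$ parameterized by a smooth projective variety $W$ whose Abel--Jacobi map $\Phi:W\to J(Y)$ is surjective with rationally connected general fiber. Connectedness of the fibers forces $\Phi_*:H_1(W,\mathbb{Z})\to H_1(J(Y),\mathbb{Z})$ to be surjective, while the triviality of $H^1$ of the fibers makes it injective modulo torsion; hence $\mathrm{Alb}(W)\cong J(Y)$ \emph{integrally}, the isomorphism being induced by the cycle class of the family. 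One then cuts $W$ down to a surface $S$ by the Lefschetz hyperplane theorem, much as in your step for $S\subset J(Y)$. In short, your reduction to ``realize the identity of $J(Y)$ integrally by a correspondence from a curve'' is reasonable bookkeeping, but the key geometric input --- a mechanism producing an isomorphism rather than an isogeny --- is exactly the step left open in your proposal.
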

The varieties $Y$ and $S$ being as above, if $Y$ does not have a universal codimension $2$ cycle, $S$ does not have a universal $0$-cycle, since a universal $0$-cycle $$\mathcal{Z}_S\in {\rm CH}^2({\rm Alb}(S)\times S)={\rm CH}^2(J(Y)\times S)$$ would produce a universal codimension $2$ cycle $$\mathcal{Z}_{\rm univ}=\mathcal{Z}\circ \mathcal{Z}_S\in{\rm CH}^2(J(Y)\times Y)$$
for $Y$.

  A particular case of this  construction also solves in the negative the following  question asked by Colliot-Th\'{e}l\`{e}ne in the original version of \cite{colliotTh}.
  \begin{question}\label{q2CT} Given  any  smooth projective variety $X$ over the complex numbers, and any function field $K=\mathbb{C}(C)$, where $C$ is a curve, is the natural morphism ${\rm CH}_0(X_K)_0\rightarrow {\rm Alb}(X)(K)$  surjective?
  \end{question}
  Indeed, we first observe the following
  \begin{prop} \label{propCA} Assume the smooth projective variety $X$ has the property that ${\rm Alb}(X)$ is a direct summand, as an abelian variety, in the Jacobian $J(C)$ of a smooth projective curve $C$. Then the following properties are equivalent.

  (i) $X$ has a universal $0$-cycle.

  (ii) For any curve $D$, the natural morphism $a_X: {\rm CH}_0(X_K)_0\rightarrow {\rm Alb}(X)(K)$ is surjective, where $K=\mathbb{C}(D)$.
  \end{prop}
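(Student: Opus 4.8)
The plan is to prove the two implications separately, exploiting the hypothesis that $\mathrm{Alb}(X)$ is a direct summand of $J(C)$ via morphisms $\sigma\colon \mathrm{Alb}(X)\to J(C)$, $\pi\colon J(C)\to \mathrm{Alb}(X)$ with $\pi\circ\sigma=\mathrm{Id}$. The direction (i)$\Rightarrow$(ii) should be the easy one: given a universal $0$-cycle $\Gamma\in\mathrm{CH}^n(\mathrm{Alb}(X)\times X)$ and a point $t\in\mathrm{Alb}(X)(K)$ for $K=\mathbb{C}(D)$, I would specialize $\Gamma$ along the $K$-point $t$ to produce a $0$-cycle $\Gamma_*(\{t\}-\{0\})\in\mathrm{CH}_0(X_K)_0$ whose Albanese class is exactly $t$, by the defining property of a universal $0$-cycle (which holds fiberwise, hence over the generic point after spreading out). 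This gives surjectivity of $a_X$ over every $K=\mathbb{C}(D)$.

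The interesting direction is (ii)$\Rightarrow$(i), and I expect the key idea to be to apply surjectivity not to an arbitrary field but to the \emph{universal} field $K=\mathbb{C}(\mathrm{Alb}(X))$ — or rather to reduce to a curve by means of the direct-summand hypothesis. Concretely, I would take $D=C$ (or a smooth projective model of the relevant curve) and consider the generic point $\eta$ of $\mathrm{Alb}(X)$, whose residue field is $\mathbb{C}(\mathrm{Alb}(X))$; the universal point $\mathrm{id}\in\mathrm{Alb}(X)(\mathbb{C}(\mathrm{Alb}(X)))$ is the tautological $K$-point. The issue is that $\mathbb{C}(\mathrm{Alb}(X))$ is the function field of a variety of dimension $>1$, not of a curve, so hypothesis (ii) does not directly apply. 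Here is where the direct-summand assumption enters: via $\sigma$ I can pull the tautological class back to $J(C)$, and on $J(C)$ I can use the universal Poincaré divisor $P_C\in\mathrm{CH}^1(J(C)\times C)$ to \emph{restrict to a curve}. More precisely, the Poincaré divisor lets me realize a point of $J(C)$ as the Abel image of an effective $0$-cycle on $C$, so the tautological $J(C)$-point over $\mathbb{C}(J(C))$ can be pushed, via a correspondence, to a $K'$-point with $K'$ the function field of a curve. This is the mechanism already used in Lemma \ref{lerefsuggestion} and in the proof of Proposition \ref{prokununiv}.

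The technical heart, and the step I expect to be the main obstacle, is the passage from a $0$-cycle defined over a function field $\mathbb{C}(D)$ (a $K$-point in the image of $a_X$) back to an honest \emph{algebraic cycle} $\Gamma$ on $\mathrm{Alb}(X)\times X$ whose Albanese action is the identity. The standard tool is spreading out: a class in $\mathrm{CH}_0(X_K)_0$ is represented by a relative cycle over a Zariski-open $U\subset D$, whose closure in $\mathrm{Alb}(X)\times X$ (after using the Poincaré divisor to transport from $C$ to $\mathrm{Alb}(X)$, exactly as in the construction of $\Gamma_X$ in Lemma \ref{lerefsuggestion}) gives a codimension $n$ cycle. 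I would then verify that the induced map $[\Gamma]_*$ on $H_1$ is the identity by a Hodge-theoretic/functorial computation: the construction via $\sigma$, $\pi$ and $P_C$ forces $[\Gamma]_*=\pi\circ\sigma=\mathrm{Id}_{\mathrm{Alb}(X)}$, precisely as in equation (\ref{eqinversefacteur}). The one delicate point is ensuring that the cycle obtained by closing up genuinely has the correct Künneth component $(1,2n-1)$ and that no correction by a cycle supported over a proper closed subset spoils the identity on $H_1$; this is controlled by the fact that $a_X$ is the universal regular quotient, so any two cycles inducing the identity on $\mathrm{Alb}(X)$ differ by something acting trivially on $H_1$.
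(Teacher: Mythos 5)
Your proposal is correct and follows essentially the same route as the paper: (i)$\Rightarrow$(ii) by restricting the universal cycle along the morphism $D\rightarrow {\rm Alb}(X)$ and correcting by a constant cycle, and (ii)$\Rightarrow$(i) by applying the surjectivity to $D=C$ and the $K$-point $p\circ i_C\in {\rm Alb}(X)(\mathbb{C}(C))$, then composing the resulting relative cycle with the Poincar\'{e} divisor on $J(C)\times C$ and the inclusion ${\rm Alb}(X)\rightarrow J(C)$. Your initial detour through $\mathbb{C}({\rm Alb}(X))$ is unnecessary (the paper goes directly to the curve $C$ furnished by the direct-summand hypothesis), and your closing remark that cycles supported over finitely many points of $C$ act trivially on $H_1$ correctly disposes of the spreading-out ambiguity you worry about.
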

  We give the proof for completeness, although similar statements and arguments can be found in \cite{voisinArbarello} and \cite{colliotTh}.
\begin{proof}   (i) implies  (ii) in an obvious way, since having a morphism $\phi: D\rightarrow {\rm Alb}(X)$ and a universal $0$-cycle $\Gamma\in {\rm CH}^n({\rm Alb}(X)\times X)$ of relative degree $0$ over ${\rm Alb}(X)$, where $n:={\rm dim}\,X$, we get by restriction to $D\times X$ a cycle
$\Gamma_D\in {\rm CH}^n(D\times X)$, of relative degree $0$ over $D$, which by definition of  a universal $0$-cycle, has the property that the composed morphism ${\rm alb}_X\circ \Gamma_{D*}:J(D)\rightarrow {\rm Alb}(X)$, equals $\phi_*:J(D)\rightarrow {\rm Alb}(X)$. It follows that, denoting by $j_D:D\rightarrow J(D)$ a chosen  inclusion, the two morphisms
${\rm alb}_X\circ \Gamma_{D*} \circ j_D: D\rightarrow {\rm Alb}(X)$ and $\phi$ differ by  a translation. Correcting $\Gamma_D$ by the adequate cycle $D\times z_0$, where $z_0\in{\rm CH}_0(X)_{\rm hom}$, gives a cycle $\Gamma'_D\in {\rm CH}^n(D\times X)$ such that
\begin{eqnarray}\label{eqnewdu1709} {\rm alb}_X\circ \Gamma_{D*} \circ j_D = \phi: D\rightarrow {\rm Alb}(X).
\end{eqnarray}
Equation (\ref{eqnewdu1709}) exactly tells that the image of $\Gamma'_D$ in ${\rm CH}_0(X_{K})_0$ maps to $\phi_K\in{\rm Alb}(X)(K)$.

In the other direction, assume that ${\rm Alb}(X)$ is a direct summand in $J(C)$, that is, for some abelian variety $B$,
$$ {\rm Alb}(X)\times B\cong J(C),$$ with inclusion $i:{\rm Alb}(X)\rightarrow J(C)$ and projection $p:J(C)\rightarrow {\rm Alb}(X)$, such that $p\circ i={\rm Id}_{{\rm Alb}(X)}$. We observe that the curve $C$ has a universal $0$-cycle $\Gamma_C$ (see Example \ref{example}).
Let $i_C$ be the natural inclusion of $C$ in $J(C)$ (it is in fact defined up to translation). The composite morphism $p_C=p\circ i_C: C\rightarrow {\rm Alb}(X)$ gives a $K$-point in ${\rm Alb}(X)(K)$, where $K=\mathbb{C}(C)$, and assuming (ii), there is a codimension $n$ cycle $Z_C\in{\rm CH}^n(C\times X)$, of relative degree $0$ over $C$, such that
\begin{eqnarray}\label{eqplouf}{\rm alb}_X\circ Z_{C*}=p_C: C\rightarrow A.\end{eqnarray}
 We next consider the composition $Z_{C,{\rm Alb}(X)}:=Z_C\circ \Gamma_C\circ i\in {\rm CH}^n({\rm Alb}(X)\times X)$ and it follows from  (\ref{eqplouf}) that ${\rm alb}_X\circ Z_{C,{\rm Alb}(X)*}={\rm Id}_{{\rm Alb}(X)}:{\rm Alb}(X)\rightarrow {\rm Alb}(X)$.
\end{proof}

We now  get a negative answer to Question \ref{q2CT} by considering a smooth projective rationally connected $3$-fold $Y$ which does not admit a universal codimension $2$ cycle while the intermediate Jacobian  $J(Y)$ has dimension $3$, so that it is the Jacobian of a curve. Such examples are constructed in \cite{voisinINVENT}: one can take for $Y$ the desingularization of a very general quartic double solid with seven nodes. We then introduce as before a surface $S$ as in Theorem \ref{theocollino}, which has ${\rm Alb}(S)\cong J(Y)$, hence has its Albanese variety isomorphic to the Jacobian of a curve. Furthermore, $S$ has no universal $0$-cycle because $Y$ does not have a universal codimension $2$ cycle, hence provides the desired example using Proposition \ref{propCA}.

\subsection{Varieties fibered over a curve\label{sec1feberedvar}}
We now consider smooth projective varieties $X$ such that the Albanese map of $X$ factors through a curve   as in  Lemma \ref{lestructure}.
 We first prove
\begin{prop}\label{coroaprouver} If the Albanese map of $X$  factors through a morphism
$ \phi_X:X\rightarrow C $, where $C$ is  a smooth projective curve such that $\phi_{X*}: {\rm Alb}(X)\rightarrow J(C) $ is an isomorphism,  and $\phi_X$  has index $1$, $X$ has a universal $0$-cycle.
\end{prop}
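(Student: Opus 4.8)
The plan is to reduce everything to Lemma \ref{lerefsuggestion}, using the index-$1$ hypothesis to write ${\rm Id}_{{\rm Alb}(X)}$ as an integral combination of maps that factor through multisection curves of $\phi_X$.

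First I would unwind the index hypothesis. Since $\dim C=1$, the assertion that $\phi_X$ has index $1$ means there are finitely many closed irreducible curves $Z_i\subset X$, each dominant over $C$, with $d_i:={\rm deg}(Z_i/C)$ the degree of $\phi_X|_{Z_i}$, together with integers $n_i$ satisfying $\sum_i n_i d_i=1$. Replacing each $Z_i$ by its normalization, I obtain smooth projective curves $\widetilde{Z_i}$, inclusions $j_i:\widetilde{Z_i}\rightarrow X$ (normalization followed by $Z_i\hookrightarrow X$), and finite morphisms $f_i:=\phi_X\circ j_i:\widetilde{Z_i}\rightarrow C$ of degree $d_i$ between smooth projective curves.

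The key computation is the second step. For the degree-$d_i$ finite morphism $f_i$ I have the pullback and pushforward on Jacobians, satisfying $f_{i*}\circ f_i^*=d_i\,{\rm Id}_{J(C)}$, and by functoriality of the Albanese $\phi_{X*}\circ j_{i*}=f_{i*}:J(\widetilde{Z_i})\rightarrow J(C)$. Since $\phi_{X*}:{\rm Alb}(X)\rightarrow J(C)$ is an isomorphism by hypothesis, I set
$$s_i:=n_i\, f_i^*\circ\phi_{X*}:{\rm Alb}(X)\rightarrow J(\widetilde{Z_i}).$$
Then $\phi_{X*}\circ(j_{i*}\circ f_i^*)=(\phi_{X*}\circ j_{i*})\circ f_i^*=f_{i*}\circ f_i^*=d_i\,{\rm Id}_{J(C)}$, so $j_{i*}\circ f_i^*=d_i\,\phi_{X*}^{-1}$, and therefore $j_{i*}\circ s_i=n_i d_i\,{\rm Id}_{{\rm Alb}(X)}$.

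Finally I assemble the cycle. The pair $(j_i,s_i)$ satisfies the hypothesis of Lemma \ref{lerefsuggestion} with $k=n_i d_i$, hence yields a cycle $\Gamma_i\in{\rm CH}^n({\rm Alb}(X)\times X)$ with $[\Gamma_i]_*=n_i d_i\,{\rm Id}_{{\rm Alb}(X)}$. Setting $\Gamma:=\sum_i\Gamma_i$ and using additivity of $\Gamma\mapsto[\Gamma]_*$, I obtain
$$[\Gamma]_*=\Big(\sum_i n_i d_i\Big)\,{\rm Id}_{{\rm Alb}(X)}={\rm Id}_{{\rm Alb}(X)},$$
which is precisely the characterization of a universal $0$-cycle for $X$ used in the proof of Proposition \ref{prokununiv}. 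The main point — rather than a genuine obstacle — is the second step: recognizing that a multisection of degree $d_i>1$ is harmless because $f_i^*$ provides a partial inverse with $f_{i*}\circ f_i^*=d_i$, and that index $1$ is exactly the arithmetic input needed to combine these partial contributions, with the weights $n_i$, into the identity. I would take care over the normalization of $f_{i*}\circ f_i^*$ and the direction of the composite entering Lemma \ref{lerefsuggestion}; alternatively one could apply that lemma once to the (possibly disconnected) curve $\bigsqcup_i\widetilde{Z_i}$ with section $(s_i)_i$, which is permitted since the paper allows nonconnected curves.
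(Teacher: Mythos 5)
Your proof is correct. The underlying idea is the same as the paper's --- the index-$1$ hypothesis produces a $1$-cycle of relative degree $1$ over $C$, and the Poincar\'e divisor of a curve upgrades the resulting splitting of $\phi_{X*}$ on $H_1$ to an actual cycle on ${\rm Alb}(X)\times X$ --- but the packaging differs. The paper keeps the degree-$1$ cycle $\Gamma\in{\rm CH}^{n-1}(X)$ whole, pushes it to a single correspondence $\Gamma'=(\phi_X,{\rm Id}_X)_*\Gamma\in{\rm CH}_1(C\times X)$, checks $\phi_{X*}\circ[\Gamma']_*={\rm Id}$ on $H^1(C,\mathbb{Z})$ by the projection formula, and composes with the Poincar\'e divisor of $C$ itself. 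You instead decompose into irreducible multisections $Z_i$, normalize, exploit $f_{i*}\circ f_i^*=d_i\,{\rm Id}_{J(C)}$ on Jacobians to produce the partial sections $s_i$, and feed each pair $(j_i,s_i)$ into Lemma \ref{lerefsuggestion} (so the Poincar\'e divisors used are those of the $\widetilde{Z_i}$), summing at the end via $\sum_i n_id_i=1$. Your route is slightly longer but makes the arithmetic role of the index hypothesis completely explicit and reuses Lemma \ref{lerefsuggestion} as a black box rather than redoing the projection-formula computation; the paper's route avoids normalizing components and treats the multisection cycle in one stroke. Two small points you flagged yourself and resolved correctly: the hypothesis of Lemma \ref{lerefsuggestion} should be read as $j_*\circ s=k\,{\rm Id}_{{\rm Alb}(X)}$ (as its proof and equation (\ref{eqjstar}) make clear), which is the composite you verify; and the characterization of a universal $0$-cycle by $[\Gamma]_*={\rm Id}_{{\rm Alb}(X)}$ is legitimate since a pointed endomorphism of an abelian variety is determined by its action on $H_1$.
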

\begin{proof} By definition, if $X,\,B$ are smooth projective and $\phi: X\rightarrow B$ is a dominant  morphism of relative dimension $d$ and  index $1$, there exists a cycle $\Gamma\in {\rm CH}^d(X)$ such that $\phi_{*}\Gamma=B$ in ${\rm CH}^0(B)$. The cycle  $\Gamma$  has a cohomology class $[\Gamma]$ acting by cup-product on integral cohomology of $X$ and we have by the projection formula
\begin{eqnarray}\label{eqactiongama} \phi_* ([\Gamma]\cup \phi^*\alpha)=\alpha\end{eqnarray}
for any $\alpha\in H^*(B,\mathbb{Z})$.

We now consider the case where $B$ is a curve $C$ and $\phi=\phi_X$ induces an isomorphism
$${\rm Alb}(X)\cong J(C).$$
 If  the index of $\phi_X$ is $1$, the construction above gives a cycle
$$\Gamma'= (\phi_X,Id_X)_*\Gamma\in {\rm CH}_1(C\times X)$$
satisfying  by (\ref{eqactiongama})
\begin{eqnarray}\label{eqgammaprime} \phi_{X*}\circ [\Gamma']_*={\rm Id}: H^1(C,\mathbb{Z})\rightarrow H^1(C,\mathbb{Z}).
\end{eqnarray}
As before we use a Poincar\'{e} divisor $Z_C\in {\rm CH}^1(J(C)\times C)$ such that
$$Z_{C*}: H_1(J(C),\mathbb{Z})\rightarrow H_1(C,\mathbb{Z})$$
is the natural isomorphism, inverse of ${\rm alb}_{C*}:  H_1(C,\mathbb{Z})\rightarrow H_1(J(C),\mathbb{Z})$.
It follows from (\ref{eqgammaprime}) that
the cycle
$$\Gamma'':=Z_{C}\circ \Gamma' \in{\rm CH}^n(J(C)\times X),\, n={\rm dim}\,X$$
has the property that
$$[\Gamma'']_*: H_1(J(C),\mathbb{Z})\rightarrow H_1(X,\mathbb{Z})$$
satisfies $\phi_{X*}\circ [\Gamma'']_*=Z_{C*}$. As $\phi_{X*}$ induces by assumption an isomorphism
$$H_1(X,\mathbb{Z})_{\rm tf}\cong H_1(C,\mathbb{Z})$$
we conclude that $[\Gamma'']_*: H_1(J(C),\mathbb{Z})=H_1({\rm Alb}(X),\mathbb{Z})\rightarrow H_1(X,\mathbb{Z})_{\rm tf}$ is the inverse of the isomorphism ${\rm alb}_{X*}: H_1(X,\mathbb{Z})_{\rm tf}\cong H_1({\rm Alb}(X),\mathbb{Z})$. Hence $\Gamma''$ is a universal $0$-cycle for $X$.
\end{proof}
A particular case of Proposition \ref{coroaprouver} is the case of rationally connected fibrations over a curve, which is considered in \cite{colliotTh} and \cite{voisinArbarello}. They admit universal $0$-cycles by \cite{ghs} and Proposition \ref{coroaprouver}.

We now show that the implication of Proposition \ref{coroaprouver} is strict by constructing  an example of a smooth projective surface $S$ with a morphism $\phi:S\rightarrow C$ to a curve, inducing an isomorphism $\phi_*:{\rm Alb}(S)\cong J(C)$, such that the index of $\phi$ is $d>1$ and $S$ admits a universal $0$-cycle. We start with any curve $C$ of genus $>0$  and choose a smooth curve \begin{eqnarray}
\label{eqsmoothcurve} \Gamma \subset C\times C
\end{eqnarray}
 such that the second projection $p:\Gamma\rightarrow C$ has degree $d$ and $\Gamma^*:H^1(C,\mathbb{Z})\rightarrow H^1(C,\mathbb{Z})$ is the identity. This exists for $d$ large enough, by taking (for example) $\Gamma$ to be a smooth member of the linear system
$|\mathcal{O}_{C\times C}((d-1)(c\times C)+d' (C\times c')+\Delta_C)|$ for some points $c,\,c'\in C$ and integers $d,\,d'$ large enough, where $\Delta_C$ is the diagonal of $C$.
We now choose an embedding $i$ of  $C$ in $\mathbb{P}^3$, which gives an embedding $({\rm Id}_C,i)$ of $C\times C$, hence of $\Gamma$, in $C\times \mathbb{P}^3$. For some very ample line bundle $L$ on $C$, we choose the surface $S\subset C\times \mathbb{P}^3$ to be a very general complete intersection of members of the linear system $|L\boxtimes \mathcal{O}_{\mathbb{P}^3}(d)|$ containing $({\rm Id}_C,i)(\Gamma)$.  The first projection $\phi:S\rightarrow C$ induces by the Lefschetz theorem on hyperplane sections an isomorphism
${\rm Alb}(S)\cong J(C)$. We have by construction $C\times \Gamma\subset C\times S$ and $C\times \Gamma$ contains the transpose  ${^t\Gamma}_{p'}\cong \Gamma$ of the graph of the morphism $p':\Gamma\rightarrow C$ given by the first projection in (\ref{eqsmoothcurve}). Hence  ${^t\Gamma}_{p'}$ is contained in $C\times S$ and using the fact that
$$\phi_{\mid \Gamma}=p,\,{\rm pr}_{1\mid {^t\Gamma}_{p'}}=p',$$  its image in $C\times C$ under $({\rm Id}_C,\phi)$ is the original curve $\Gamma\subset C\times C$. Hence it acts as the identity on $H^1(C,\mathbb{Z})$, so $S$ has a universal $0$-cycle.
Finally we argue as in \cite{lopez} for example, to conclude that for very general $S$, the N\'{e}ron-Severi group of $S$ is generated by a fiber of $\phi$, the line bundle $\mathcal{O}_{\mathbb{P}^3}(1)$ and the class of the curve $({\rm Id}_C,i)(\Gamma)$.
This immediately implies that the index of $\phi$ is $d$, since $\Gamma$ has degree $d$ over $C$.
\section{A $3$-dimensional  example for Question \ref{questionCT}\label{secexCT}}
We give in this section an example of a smooth projective $3$-fold $X$ with representable ${\rm CH}_0$ and no universal $0$-cycle, thus proving Theorem \ref{theo}.
Let $S$ be a projective $K3$ surface over the complex numbers  equipped with a fixed point free antisymplectic involution $g$. We denote by $\Sigma:=S/g$ the corresponding Enriques surface.
 Let $E$ be an elliptic curve with a translation $t_\xi$ by a point $\xi$ of order $2$. We denote $E_\xi:=E/t_\xi$.

 The variety we will consider is

 \begin{eqnarray}\label{eqX} X:= (E\times S)/(t_\xi,g).\end{eqnarray}
 It admits two natural morphisms
 \begin{eqnarray}\label{eqtowmorph} p_{E_\xi}: X\rightarrow E_\xi,\,p_\Sigma: X\rightarrow \Sigma.\end{eqnarray}

 \begin{lemm}\label{lealbetchow} (i) The morphism $p_{E_\xi}$ induces an isomorphism
 $$ p_{E_\xi*}:{\rm CH}_0(X)\rightarrow  {\rm CH}_0(E_\xi).$$ In particular ${\rm CH}_0(X)$ is representable.

 (ii) One has ${\rm Alb}(X)\cong E_\xi$ and the morphism $p_{E_\xi}$ identifies (up to translation)  with the Albanese map of $X$.
 \end{lemm}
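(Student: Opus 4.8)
The plan is to treat the two assertions in turn: I would establish (ii) first by a purely topological computation, and then deduce the representability statement (i) from it together with Roitman's theorem on torsion $0$-cycles. Throughout I use that, since $t_\xi$ is a nontrivial (hence fixed point free) translation and $g$ is fixed point free, the involution $(t_\xi,g)$ acts freely on $E\times S$, so that $\pi\colon E\times S\to X$ is an \'etale double cover and $p_{E_\xi}\colon X\to E_\xi$ exhibits $X$ as the fibre bundle $E\times_{\mathbb{Z}/2}S\to E/(\mathbb{Z}/2)=E_\xi$ with fibre $S$, associated to the principal $\mathbb{Z}/2$-bundle $E\to E_\xi$.

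For (ii), I would argue that, as $S$ is a $K3$ surface it is simply connected, so the homotopy exact sequence of this fibre bundle (with $\pi_2(E_\xi)=0$, $\pi_1(S)=1$) gives $\pi_1(X)\cong\pi_1(E_\xi)=\mathbb{Z}^2$, the isomorphism being induced by $p_{E_\xi}$. Hence $p_{E_\xi*}\colon H_1(X,\mathbb{Z})\to H_1(E_\xi,\mathbb{Z})$ is an isomorphism of torsion free lattices. Being induced by a morphism of varieties it is an isomorphism of weight-one Hodge structures, so by the universal property of the Albanese variety it induces an isomorphism $\mathrm{Alb}(X)\cong E_\xi$ and identifies $p_{E_\xi}$ with the Albanese map up to translation.

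The core of (i) is the identity $g_*=-\mathrm{id}$ on $\mathrm{CH}_0(S)_0$. I would derive it from the \'etale double cover $h\colon S\to\Sigma$: for $z\in\mathrm{CH}_0(S)_0$ one has $h^*h_*z=(1+g_*)z$, while $h_*z$ lies in $\mathrm{CH}_0(\Sigma)_0$, which vanishes because the Bloch conjecture is known for the Enriques surface $\Sigma$ (a surface of Kodaira dimension $0$); hence $(1+g_*)z=0$. With this in hand I would use that $\pi_*$ is surjective and that every class on $E\times S$ is a sum of a base point, a \emph{horizontal} class $(e,s_0)-(0,s_0)$, and a \emph{vertical} class $(e,s)-(e,s_0)$, to conclude that $\mathrm{CH}_0(X)$ is generated by $[0,s_0]$, the horizontal classes $[e,s_0]-[0,s_0]$, and the vertical classes $z=[e,s]-[e,s_0]$. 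The decisive computation is $\pi^*z=\bigl(\{e\}-\{e+\xi\}\bigr)\times(s-s_0)$ in $\mathrm{CH}_0(E\times S)$, where I have used $g_*=-\mathrm{id}$ to flip the sign of the second summand. Since $\{e\}-\{e+\xi\}$ is the class of the order-two point $\xi$, hence $2$-torsion in $\mathrm{CH}_0(E)_0$, the class $\pi^*z$ is $2$-torsion, and therefore $z$ is torsion (because $\pi_*\pi^*=2$). By (ii) the vertical class $z$ lies in the Albanese kernel $\ker a_X$, which is torsion free by Roitman's theorem \cite{roitman}; consequently every vertical class vanishes.

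It then remains to assemble the pieces. The vanishing of the vertical classes shows that the map $\eta\colon E_\xi\to\mathrm{CH}_0(X)_0$, $\bar e\mapsto[e,s_0]-[0,s_0]$, is well defined (the ambiguity $[e+\xi,s_0]-[e,s_0]$ being vertical), and the relation $\{e_1\}+\{e_2\}\sim\{e_1+e_2\}+\{0\}$ on $E$, pushed into $X$, makes $\eta$ a group homomorphism. Since $p_{E_\xi*}\circ\eta=\mathrm{id}_{E_\xi}$ and $\eta$ is surjective (once the vertical classes are killed, the horizontal classes generate $\mathrm{CH}_0(X)_0$), both maps are mutually inverse isomorphisms, which yields $\mathrm{CH}_0(X)\cong\mathrm{CH}_0(E_\xi)$ and representability. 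The main obstacle, and the point where the geometry really enters, is precisely the vanishing of the vertical classes: the naive rational equivalences only show that these classes are torsion, and it is essential to combine the antisymplectic input $g_*=-\mathrm{id}$ (coming from the Enriques quotient) with Roitman's torsion-freeness of the Albanese kernel in order to upgrade ``torsion'' to ``zero''.
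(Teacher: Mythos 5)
Your proof is correct, and it rests on exactly the same two pillars as the paper's: Roitman's theorem that the Albanese kernel is torsion-free, and the fact that the Enriques involution $g$ acts as $-{\rm Id}$ on ${\rm CH}_0(S)_{\rm hom}$ (which you rederive from ${\rm CH}_0(\Sigma)_0=0$ and $h^*h_*=1+g_*$, whereas the paper simply cites \cite{blochkaslieberman}). The organization differs, however. For (i) the paper reduces to the vanishing of the Albanese kernel ${\rm CH}_0(X)_{\rm alb}$, embeds that torsion-free group via $\pi^*$ into the $(t_\xi,g)$-invariant part of ${\rm CH}_0(E\times S)_{\rm alb}$, and kills the latter after tensoring with $\mathbb{Q}$ by means of the decomposition ${\rm CH}_0(E\times S)_{\mathbb{Q}}={\rm CH}_0(E)_{\mathbb{Q}}\otimes o_S\oplus {\rm Im}({\rm CH}_0(E)_{\mathbb{Q}}\otimes{\rm CH}_0(S)_{\rm hom})$, on whose second summand the involution acts by $-{\rm Id}$. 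You instead work integrally and pointwise: the horizontal/vertical splitting of a $0$-cycle, the identity $\pi^*\bigl([e,s]-[e,s_0]\bigr)=(\{e\}-\{e+\xi\})\times(s-s_0)$, and the $2$-torsion of $\{e\}-\{e+\xi\}$ in ${\rm CH}_0(E)_0\cong E$ show each vertical class is torsion (your step via $\pi_*\pi^*=2$ yields $4z=0$, which suffices), after which Roitman kills it; the explicit inverse $\eta$ then makes the isomorphism ${\rm CH}_0(X)_0\cong E_\xi$ concrete. Your version buys self-containedness and integral control at each step; the paper's buys brevity by passing to $\mathbb{Q}$-coefficients from the start. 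Two trivial remarks: the correct reference for torsion-freeness of the Albanese kernel is Roitman's Annals paper on torsion of $0$-cycles, i.e.\ \cite{roitmantorsion}, not the rational-equivalence note; and for (ii) the paper argues exactly as you do, from connectedness and simple connectedness of the K3 fibres of $p_{E_\xi}$.
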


\begin{proof} The morphism $p_{E_\xi}: X\rightarrow E_\xi=E/t_\xi$ has all its fibers isomorphic to   the $K3$ surface $S$, which is connected and simply connected. It follows that $p_{E*}:H_1(X,\mathbb{Z})\rightarrow  H_1(E_\xi,\mathbb{Z})$ is an isomorphism, which proves (ii). As (ii) is proved, in order to prove  (i), it suffices to prove that   the kernel
${\rm CH}_0(X)_{\rm alb}$
 of the Albanese morphism
 ${\rm CH}_0(X)_{\rm hom}\rightarrow {\rm Alb}(X)$ is trivial.
 As this kernel has no torsion by \cite{roitmantorsion}, it injects by pull-back in the invariant part under $(t_\xi,g)$ of the group  ${\rm CH}_0(E\times S)_{\rm alb}$.
 It thus  suffices to show that the involution $(t_\xi,g)$ acts by $-{\rm Id}$ on the group  ${\rm CH}_0(E\times S)_{\rm alb}\otimes \mathbb{Q}$. The involution $t_\xi$ acts as the identity on ${\rm CH}_0(E)\otimes \mathbb{Q}$. The involution $g$ acts by $-{\rm Id}$ on the group ${\rm CH}_0(S)_{\rm hom}$ by \cite{blochkaslieberman}.
  With rational coefficients, choosing a $0$-cycle $o_S$ of degree $1$ which is $g$-invariant, we have a decomposition
  \begin{eqnarray}\label{eqdecompdu1509} {\rm CH}_0(E\times S)\otimes \mathbb{Q}={\rm CH}_0(E)_\mathbb{Q}\otimes o_S \oplus \\
  \nonumber
  {\rm Im}\,({\rm CH}_0(E)_\mathbb{Q}\otimes {\rm CH}_0(S)_{\rm hom}\rightarrow  {\rm CH}_0(E\times S)_\mathbb{Q}) ,\end{eqnarray}
  and the group ${\rm CH}_0(E\times S)_{\rm alb}\otimes \mathbb{Q}$ is contained in the second summand of the decomposition (\ref{eqdecompdu1509}), on which $(t_\xi,g)$ acts by $-{\rm Id}$,
  which concludes the proof.
\end{proof}

We now prove the main result.
\begin{theo} \label{theoexample} For a fixed $K3$ surface $S$ with involution $g$ over an Enriques surface $\Sigma$, there exists an elliptic curve  $E$ with a translation $t_\xi$ of order $2$, such that  the variety $X$ constructed above does not admit a universal $0$-cycle.
\end{theo}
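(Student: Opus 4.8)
The plan is to reduce the statement to the non-algebraicity of a single integral Hodge class and then feed in the Benoist--Ottem obstruction. By the discussion of Section~\ref{sec1}, $X$ admits a universal $0$-cycle if and only if the class $\gamma_X\in H^{6}({\rm Alb}(X)\times X,\mathbb{Z})_{\rm tf}=H^{6}(E_\xi\times X,\mathbb{Z})_{\rm tf}$ is algebraic; here I use Lemma~\ref{lealbetchow}, which identifies ${\rm Alb}(X)$ with the elliptic curve $E_\xi$, so that ${\rm Alb}(X)$ is literally a Jacobian. Since $K_{E_\xi\times X}$ is $2$-torsion, proving that $\gamma_X$ is not algebraic is exactly the promised failure of the integral Hodge conjecture modulo torsion for a curve class on a fourfold with torsion canonical bundle. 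So the whole problem becomes: show that $\gamma_X$ is not algebraic for a suitable $E$.

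Next I would exploit the étale double cover $\pi\colon\widetilde X:=E\times S\to X$ with deck involution $(t_\xi,g)$, lying over the isogeny $f\colon E\to E_\xi$. The key point is that $\widetilde X$ trivially has a universal $0$-cycle: its Albanese is $E$, the projection $\widetilde X\to E$ has the obvious section $t\mapsto(t,s_0)$, and the graph $\widetilde\Gamma$ of this section is a $1$-cycle in $E\times\widetilde X$ with $[\widetilde\Gamma]_*={\rm Id}$ on $H_1(E)$. Pushing $\widetilde\Gamma$ forward along $f\times\pi$ produces an honest algebraic cycle $A\in{\rm CH}^{3}(E_\xi\times X)$ whose action on $H_1$ is $[A]_*=\pi_*\circ[\widetilde\Gamma]_*\circ f^{!}=f_*\circ f^{!}=\deg(f)\cdot{\rm Id}=2\,{\rm Id}$ on $H_1(E_\xi)$, the transfer $f^{!}$ accounting for the factor $2$. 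Hence $[A]=2\gamma_X$ in $H^{6}(E_\xi\times X,\mathbb{Z})_{\rm tf}$, so \emph{$2\gamma_X$ is algebraic} and the obstruction to algebraicity of $\gamma_X$ itself is a $2$-torsion phenomenon. This is precisely why one should expect the Benoist--Ottem torsion obstruction to intervene.

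The last step, and the main obstacle, is to match this $2$-torsion obstruction with an explicit non-algebraic torsion class on the threefold $X$. Because $H^*(E_\xi,\mathbb{Z})$ is torsion-free, the Künneth formula gives ${\rm Tors}\,H^{6}(E_\xi\times X,\mathbb{Z})\cong H^{4}(X,\mathbb{Z})_{\rm tors}$, via $\tau\mapsto[{\rm pt}_{E_\xi}]\times\tau$; so the natural home of the obstruction is the group $H^{4}(X,\mathbb{Z})_{\rm tors}$ of torsion curve classes on $X$, exactly where Benoist and Ottem work. I would analyse the double cover $F:=f\times{\rm id}_X\colon E\times X\to E_\xi\times X$: the pullback $F^*\gamma_X$ is visibly algebraic (realised by the graph of $E\xrightarrow{t\mapsto(t,s_0)}\widetilde X\xrightarrow{\pi}X$, which has $H_1$-action $f_*$), and the defect measuring the failure of descent of this cycle class, read off from the transfer relation $F_*F^*=2$ and the $\mathbb{Z}/2$-equivariant (Cartan--Leray/Gysin) structure of the cover, is a well-defined $2$-torsion class $\tau\in H^{4}(X,\mathbb{Z})_{\rm tors}$. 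The content I must establish is that \emph{$\gamma_X$ is algebraic if and only if $\tau$ is algebraic on $X$}; the forward implication — an algebraic representative of $\gamma_X$ produces, after correcting by $\tfrac12 A$ and projecting to the torsion summand, an algebraic representative of $\tau$ — is what makes this a genuine reduction, and its careful integral bookkeeping is the delicate part.

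Finally I would invoke the Benoist--Ottem input. For the fixed $S$ (equivalently the fixed Enriques surface $\Sigma$), their result, in the form of the mild generalization with a short proof recorded in Proposition~\ref{propBOwithnewproof}, produces an elliptic curve $E$ with a $2$-torsion translation $t_\xi$ for which the torsion curve class $\tau$ on $X=(E\times S)/(t_\xi,g)$ is not algebraic (see \cite{benoistOttem}). Choosing this $E$, the reduction of the previous paragraph shows $\gamma_X$ is not algebraic, hence $X$ has no universal $0$-cycle, proving Theorem~\ref{theoexample}. I expect the two genuine difficulties to be (a) the identification $\gamma_X\text{ algebraic}\Leftrightarrow\tau\text{ algebraic}$, i.e.\ pinning down the torsion defect of the cover and its compatibility with cycle classes, and (b) ensuring that Proposition~\ref{propBOwithnewproof} can be arranged for a choice of $E$ compatible with the fixed $S$ — presumably a very general such $E$.
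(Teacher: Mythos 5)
Your overall framing is correct (a universal $0$-cycle exists iff $\gamma_X$ is algebraic on $E_\xi\times X$, the class $2\gamma_X$ is algebraic via the degree-$2$ cover $E\times S\to X$, so the obstruction must be a $2$-torsion phenomenon to be detected by a Benoist--Ottem type argument), and your computation that the pushforward $A$ of the section graph satisfies $p_{E_\xi*}\circ[A]_*=2\,{\rm Id}$ is sound. But the step you yourself flag as ``the delicate part'' --- the construction of a torsion class $\tau\in H^4(X,\mathbb{Z})_{\rm tors}$ and the equivalence ``$\gamma_X$ algebraic $\Leftrightarrow$ $\tau$ algebraic'' --- is a genuine gap, not just bookkeeping. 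The recipe ``correct by $\tfrac12 A$ and project to the torsion summand'' is not integrally meaningful: $\tfrac12[A]$ is not an integral class, and the obstruction to algebraicity of $\gamma_X$ lives in the quotient (Hodge classes)/(algebraic classes) of $H^6(E_\xi\times X,\mathbb{Z})_{\rm tf}$, which does not map in any evident way to $H^4(X,\mathbb{Z})_{\rm tors}$. Moreover, your final appeal to Proposition~\ref{propBOwithnewproof} is to the wrong statement: that proposition concerns correspondences $\Gamma\in{\rm CH}^d(E\times W)$ acting nontrivially on $H^1(\,\cdot\,,\mathbb{Z}/k)$; it says nothing directly about non-algebraic torsion classes in $H^4(X,\mathbb{Z})$. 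Linking the two is essentially the content of the Benoist--Ottem paper itself, so your plan presupposes a second substantial argument that is never given.

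The paper avoids all of this by never introducing a torsion class on $X$. It assumes a universal $0$-cycle $\Gamma\in{\rm CH}_1(E_\xi\times X)$ exists, so that $[\Gamma]^*\circ p_{E_\xi}^*={\rm Id}$ on $H^1(E_\xi,\mathbb{Z}/2)$, and works with the $\mathbb{Z}/2$-classes $\sigma_\Sigma,\sigma_{E_\xi},\sigma_X$ classifying the three \'etale double covers. The key geometric input is the identity $p_{E_\xi}^*(\sigma_{E_\xi})+p_\Sigma^*(\sigma_\Sigma)=0$ in $H^1(X,\mathbb{Z}/2)$, obtained by computing $\sigma_X$ on $E_\xi\times\Sigma$ via restriction to the two rulings. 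Applying $\Gamma^*$ and using the identity above yields $\sigma_{E_\xi}=\Gamma_\Sigma^*(\sigma_\Sigma)$ for the induced correspondence $\Gamma_\Sigma\in{\rm CH}_1(E_\xi\times\Sigma)$, and Proposition~\ref{propBOwithnewproof} (applied with $W=\Sigma$, $k=2$, $\beta=\sigma_\Sigma$) says this is impossible once $E_\xi$ avoids a countable set. Note also that only the implication ``universal $0$-cycle $\Rightarrow$ contradiction'' is needed; your proposed equivalence is both harder and unnecessary. To repair your write-up you would either have to carry out the descent/transfer analysis rigorously and then reprove the Benoist--Ottem non-algebraicity of $\tau$, or switch to the direct correspondence argument on $H^1(\,\cdot\,,\mathbb{Z}/2)$ as above.
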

\begin{proof} Given $S,\,E$ and $\xi$, let us assume that $X$ has a universal $0$-cycle.  By Lemma \ref{lealbetchow}, a universal $0$-cycle for $X$ is given by a $1$-cycle
$$\Gamma\in {\rm CH}_1(E_\xi\times X)
$$
which has the property that $[\Gamma]_*: H_1(E_\xi,\mathbb{Z})\rightarrow H_1(X,\mathbb{Z})_{\rm tf}$ is inverse to the isomorphism $p_{E_\xi*}:H_1(X,\mathbb{Z})_{\rm tf}\rightarrow  H_1(E_\xi,\mathbb{Z})$, that is
\begin{eqnarray}\label{eqcominveEeta} p_{E_\xi*}\circ [\Gamma]_*={\rm Id}: H_1(E_\xi,\mathbb{Z})\rightarrow H_1(E_\xi,\mathbb{Z}).
\end{eqnarray}
Dualizing (\ref{eqcominveEeta}) and passing to $\mathbb{Z}/2$-coefficients, we get that

\begin{eqnarray}\label{eqcominveEetadualmod2}  [\Gamma]^*\circ p_{E_\xi}^*={\rm Id}: H^1(E_\xi,\mathbb{Z}/2)\rightarrow H^1(E_\xi,\mathbb{Z}/2).
\end{eqnarray}

The \'{e}tale double covers $S\rightarrow \Sigma$, $E\rightarrow E_\xi$, $q=(p_{E_\xi},p_\Sigma):X\rightarrow E_\xi\times \Sigma$ are given by elements
$$\sigma_\Sigma\in  H^1(\Sigma,\mathbb{Z}/2),\,\sigma_{E_\xi}\in  H^1(E_\xi,\mathbb{Z}/2),\,\sigma_X\in  H^1(E_\xi\times \Sigma,\mathbb{Z}/2)$$ that are characterized by the fact that they are nonzero and they are annihilated respectively  by the pull-backs maps
$$H^1(\Sigma,\mathbb{Z}/2)\rightarrow H^1(S,\mathbb{Z}/2),\,H^1(E_\xi,\mathbb{Z}/2)\rightarrow H^1(E,\mathbb{Z}/2),\, q^*:H^1(E_\xi\times \Sigma,\mathbb{Z}/2)\rightarrow H^1(X,\mathbb{Z}/2).$$
Recalling the notation  of  (\ref{eqtowmorph}), we claim that
\begin{eqnarray}\label{eqpeutetre} \sigma_X={\rm pr}_{E_\xi}^*(\sigma_{E_\xi})+{\rm pr}_\Sigma^*(\sigma_\Sigma)\,\,{\rm in}\,\,H^1(E_\xi\times \Sigma,\mathbb{Z}/2),
\end{eqnarray}
where ${\rm pr}_{E_\xi},\,{\rm pr}_{\Sigma}$ are the projections from $E_\xi\times \Sigma$ to its factors, so that
\begin{eqnarray}\label{eqpetqeta} p_{E_\xi}={\rm pr}_{E_\xi}\circ q,\,\,p_{\Sigma}={\rm pr}_\Sigma\circ q.
\end{eqnarray}
In order to prove (\ref{eqpeutetre}), we write
$$\sigma_X={\rm pr}_{E_\xi}^*(\alpha)+{\rm pr}_\Sigma^*(\beta)\,\,{\rm in}\,\,H^1(E_\xi\times \Sigma,\mathbb{Z}/2)$$
for some classes $\alpha\in H^1(E_\xi,\mathbb{Z}/2),\,\beta\in H^1(\Sigma,\mathbb{Z}/2)$. It follows that
for any $e\in E_\xi,\,s\in \Sigma$,
$$\sigma_{X\mid \{e\}\times \Sigma}=\beta,\,\sigma_{X\mid E_\xi\times \{s\}}=\alpha.$$
Finally we observe that the fiber of $p_{E_\xi}$ over $e$ is the double cover of $\Sigma$ isomorphic to $S$ and the fiber of $p_\Sigma$ over $s$ is the double cover of $E_\xi$ isomorphic to $E$, so that
$$\beta=\sigma_\Sigma,\,\alpha=\sigma_{E_\xi},$$
which proves the claim.

The meaning of (\ref{eqpeutetre}) is that
\begin{eqnarray}\label{eqpeutetremean}q^*({\rm pr}_{E_\xi}^*(\sigma_{E_\xi})+{\rm pr}_\Sigma^*(\sigma_\Sigma))=0\,\,{\rm in}\,\,H^1(X,\mathbb{Z}/2),\end{eqnarray}
which  thanks to (\ref{eqpetqeta}) rewrites as
\begin{eqnarray}\label{eqrewrite} p_{E_\xi}^*(\sigma_{E_\xi})+{p}_\Sigma^*(\sigma_\Sigma)=0\,\,{\rm in}\,\,H^1(X,\mathbb{Z}/2).
\end{eqnarray}
 Let $$\Gamma_E:=({\rm Id}_{E_\xi}, p_{E_\xi})_*(\Gamma)\in{\rm CH}_1(E_\xi\times E_\xi),\,\Gamma_\Sigma:=({\rm Id}_{E_\xi}, p_\Sigma)_*(\Gamma)\in {\rm CH}_1(E_\xi\times \Sigma).$$
 It follows from (\ref{eqrewrite}) that
 $$\Gamma^*(p_{E_\xi}^*(\sigma_{E_\xi}))+\Gamma^*({p}_\Sigma^*(\sigma_\Sigma))=0\,\,{\rm in}\,\,H^1(E_\xi,\mathbb{Z}/2),$$
 which can be written as
 \begin{eqnarray}\label{eqfatal}  \Gamma_E^*(\sigma_{E_\xi})+\Gamma_\Sigma^*(\sigma_\Sigma)=0\,\,{\rm in}\,\,H^1(E_\xi,\mathbb{Z}/2).
 \end{eqnarray}
 By equation (\ref{eqcominveEetadualmod2}),  $\Gamma_E^*$ acts as the identity on $H^1(E_\xi,\mathbb{Z}/2)$, hence
 we finally get from (\ref{eqfatal})
 \begin{eqnarray} \label{eqfatal1} \sigma_{E_\xi}=\Gamma_\Sigma^*(\sigma_\Sigma)\,\,{\rm in}\,\,H^1(E_\xi,\mathbb{Z}/2).
 \end{eqnarray}

 The  results of \cite[Propositions 1.1 and  2.1]{benoistOttem}  tell now that  there exist $E_\xi,\,\sigma_{E_\xi}$ such that (\ref{eqfatal1})  holds for no  $1$-cycle $\Gamma_\Sigma\in{\rm CH}_1(E_\xi\times \Sigma)$, (see also Proposition \ref{propBOwithnewproof} for  an alternative proof and generalization of this statement).
 The analysis above thus shows that for this choice of $S$ and of pair $(E_\xi,\,\sigma_{E_\xi})$ (determining a double cover $E$ with translation $t_\xi$), $X$ does not have a universal $0$-cycle.
\end{proof}
\begin{coro} \label{coroindex} For $S$ fixed and  $E$ very general, the index of $p_{E_\xi}: X\rightarrow E_\xi$ is $2$.
\end{coro}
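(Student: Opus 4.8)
The plan is to trap the index between the two obvious bounds $1$ and $2$, and then to exclude the value $1$ by invoking the absence of a universal $0$-cycle.

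For the upper bound I would exhibit an explicit degree-$2$ multisection of $p_{E_\xi}$. As noted in the proof of Lemma~\ref{lealbetchow}, the fiber of the second projection $p_\Sigma\colon X\to\Sigma$ over a point $s\in\Sigma$ is the double cover of $E_\xi$ isomorphic to $E$; it is an irreducible curve dominating $E_\xi$ of degree $2$. (Equivalently, the image in $X$ of $E\times\{s\}$ for $s\in S$ is a curve isomorphic to $E$ mapping to $E_\xi$ by the degree-$2$ isogeny $E\to E_\xi$, since $g$ is fixed-point free.) Hence the index of $p_{E_\xi}$ divides $2$, and it remains only to rule out the value $1$.

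For the lower bound I would argue by contradiction. By Lemma~\ref{lealbetchow}(ii) the morphism $p_{E_\xi}$ is, up to translation, the Albanese map of $X$ and induces an isomorphism ${\rm Alb}(X)\cong J(E_\xi)$. If $p_{E_\xi}$ had index $1$, then Proposition~\ref{coroaprouver} would immediately produce a universal $0$-cycle for $X$. For $E$ very general this contradicts Theorem~\ref{theoexample}, in the form in which the underlying Benoist--Ottem obstruction holds for very general parameters (\cite{benoistOttem} and Proposition~\ref{propBOwithnewproof}). Therefore the index is not $1$, and combined with the divisibility just established it equals $2$.

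To make the appeal to ``$E$ very general'' transparent and to isolate the single point that really needs checking, I would run this last step directly at the level of mod-$2$ cohomology, mirroring the proof of Theorem~\ref{theoexample}. If the index were $1$ there would exist an irreducible curve $Z\subset X$ dominating $E_\xi$ of odd degree $d$ over $E_\xi$ (otherwise every such degree would be even and the index would be even). Normalizing $Z$ to $\widetilde Z$ with maps $\tilde f\colon\widetilde Z\to E_\xi$ of degree $d$ and $\tilde\imath\colon\widetilde Z\to X$, and setting
$$\Gamma_Z:=(\tilde f,\tilde\imath)_*[\widetilde Z]\in{\rm CH}_1(E_\xi\times X),$$
one computes $p_{E_\xi*}\circ[\Gamma_Z]_*=\tilde f_*\tilde f^*=d\,{\rm Id}$ on $H_1(E_\xi,\mathbb{Z})$, by the projection formula for the degree-$d$ map $\tilde f$. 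The crucial observation---and the one computation I expect to be the real obstacle---is that, because $d$ is odd, dualizing and reducing mod $2$ reproduces exactly the identity
$$[\Gamma_Z]^*\circ p_{E_\xi}^*={\rm Id}\colon H^1(E_\xi,\mathbb{Z}/2)\rightarrow H^1(E_\xi,\mathbb{Z}/2)$$
that was used in (\ref{eqcominveEetadualmod2}) for a genuine universal $0$-cycle. From this point the proof of Theorem~\ref{theoexample} applies verbatim: using the identity~(\ref{eqrewrite}), which is a property of $X$ alone, and putting $\Gamma_\Sigma:=({\rm Id}_{E_\xi},p_\Sigma)_*\Gamma_Z$, one arrives at~(\ref{eqfatal1}), namely $\sigma_{E_\xi}=\Gamma_\Sigma^*(\sigma_\Sigma)$ in $H^1(E_\xi,\mathbb{Z}/2)$. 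By Proposition~\ref{propBOwithnewproof}, for very general $E$ this relation has no solution, the desired contradiction. Everything else is either elementary (the degree-$2$ multisection) or a direct citation; the only bookkeeping to be done carefully is the correspondence identity $p_{E_\xi*}[\Gamma_Z]_*=d\,{\rm Id}$, so that oddness of $d$ recovers (\ref{eqcominveEetadualmod2}).
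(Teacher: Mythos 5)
Your proof is correct and takes essentially the same route as the paper: the image of a curve $E\times\{s\}$ gives the upper bound $2$, and index $1$ is excluded because Proposition~\ref{coroaprouver} would then contradict Theorem~\ref{theoexample}. Your third paragraph, which reruns the mod-$2$ argument of Theorem~\ref{theoexample} directly on an odd-degree multisection (using that $d\,{\rm Id}\equiv{\rm Id}$ mod $2$), is a correct and slightly more self-contained elaboration, but it is not needed beyond the citation the paper itself uses.
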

\begin{proof} The index of $p_{E_\xi}$ is either $1$ or $2$, since the image of a curve $E\times s$ in $X$ for some $s\in S$ has degree $2$ over $E_\eta$. However, the index cannot be $1$, as  otherwise Proposition  \ref{coroaprouver} would provide a contradiction with  Theorem \ref{theoexample}.
\end{proof}
We finish with the following statement, which combined with Corollary \ref{coroindex} shows that the index of $p_{E_\xi}$ is not dictated by topological or Hodge-theoretic reasons.
\begin{lemm}\label{lehodgeindex} There exists an integral Hodge class $\alpha\in H^4(X,\mathbb{Z})$ of degree $4$ on $X$ such that
$p_{E_\xi*}\alpha=1_{E_\xi}$ in $H^0(E_\xi,\mathbb{Z})$.
\end{lemm}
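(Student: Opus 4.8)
The plan is to produce the class $\alpha$ as an integral lift of the fibrewise point class, and to observe that for this particular $X$ being integral already forces being Hodge, so no extra work is needed to arrange the Hodge condition. Write $\pi:E\times S\to X$ for the étale double cover defining $X$, and recall from Lemma \ref{lealbetchow} that $p_{E_\xi}:X\to E_\xi$ is a smooth fibration all of whose fibres are isomorphic to the $K3$ surface $S$. The pushforward $p_{E_\xi*}:H^4(X,\mathbb{Z})\to H^0(E_\xi,\mathbb{Z})=\mathbb{Z}$ is, fibrewise, the degree map $H^4(S,\mathbb{Z})=\mathbb{Z}\to\mathbb{Z}$, so the statement amounts to finding an integral Hodge class whose restriction to a fibre $S$ is the point class.

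First I would check that the Hodge-theoretic condition is automatic, i.e.\ that $H^4(X,\mathbb{Q})$ is of pure type $(2,2)$. Since $\pi$ is a free quotient by the involution $\iota=(t_\xi,g)$, one has $H^4(X,\mathbb{Q})=H^4(E\times S,\mathbb{Q})^{\iota}$, and by the Künneth formula together with $H^1(S)=H^3(S)=0$ and $H^3(E)=H^4(E)=0$ this group equals $(H^0(E)\otimes H^4(S))\oplus(H^2(E)\otimes H^2(S))^{\iota}$. On $H^0(E)\otimes H^4(S)$ the involution acts trivially and the summand is of type $(2,2)$; on $H^2(E)\otimes H^2(S)$ it acts by $1\otimes g^{*}$, so its invariants are $H^2(E)\otimes H^2(S)^{+}$, where $H^2(S)^{+}$ is the $g^{*}$-invariant part. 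The key input is that $g$ is \emph{antisymplectic}: $g^{*}=-1$ on $H^{2,0}(S)\oplus H^{0,2}(S)$, hence $H^2(S)^{+}\subset H^{1,1}(S)$ and $H^2(E)\otimes H^2(S)^{+}$ is again of type $(2,2)$. Equivalently $h^{3,1}(X)=0$. Thus every class in $H^4(X,\mathbb{Z})$ is automatically a Hodge class, and it remains only to produce an integral $\alpha$ with $p_{E_\xi*}\alpha=1_{E_\xi}$.

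For the existence of such an $\alpha$ I would run the Leray spectral sequence of $p_{E_\xi}$ with $\mathbb{Z}$-coefficients, $E_2^{a,b}=H^a(E_\xi,R^bp_{E_\xi*}\mathbb{Z})\Rightarrow H^{a+b}(X,\mathbb{Z})$. Because $g^{*}$ acts trivially on $H^4(S,\mathbb{Z})=\mathbb{Z}$, the sheaf $R^4p_{E_\xi*}\mathbb{Z}$ is the constant sheaf $\mathbb{Z}$ and $E_2^{0,4}=H^0(E_\xi,\mathbb{Z})=\mathbb{Z}$. The only differential that could leave this term is $d_2:E_2^{0,4}\to E_2^{2,3}=H^2(E_\xi,R^3p_{E_\xi*}\mathbb{Z})$, and this target vanishes since $H^3(S)=0$; all higher differentials vanish because $E_\xi$ is a curve. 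Hence $E_\infty^{0,4}=E_2^{0,4}=\mathbb{Z}$, and the edge homomorphism $H^4(X,\mathbb{Z})\to E_\infty^{0,4}=H^0(E_\xi,R^4p_{E_\xi*}\mathbb{Z})$, which is exactly the restriction of a class to the fibres, is surjective. Choosing $\alpha\in H^4(X,\mathbb{Z})$ mapping to the generator, its restriction to a fibre $S$ is the point class, so $p_{E_\xi*}\alpha=1_{E_\xi}$; by the previous paragraph $\alpha$ is a Hodge class, which completes the proof.

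I expect the main point to be the Hodge-type computation of the second paragraph: it is there that the antisymplectic hypothesis on $g$ is used in an essential way, and it is what makes the index-$1$ phenomenon occur at the level of Hodge classes while, by Corollary \ref{coroindex}, failing at the level of algebraic cycles. The spectral sequence step is routine once one notes $H^3(S)=0$. As a sanity check one may compare $\alpha$ with the algebraic curve $C_0:=\pi(E\times\{s_0\})$, for which $\pi^{*}[C_0]=2\,[E\times\{s_0\}]$ and $p_{E_\xi*}[C_0]=2\cdot 1_{E_\xi}$: the class $\alpha$ behaves like an integral ``half'' of $[C_0]$ at the topological level, one that does not come from an algebraic cycle.
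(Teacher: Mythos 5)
Your proof is correct, but it reaches the integral class $\alpha$ by a genuinely different route than the paper. On the Hodge-theoretic side the two arguments are equivalent: the paper simply notes that $H^2(X,\mathcal{O}_X)=0$ (which by Serre duality is the same as $h^{3,1}(X)=0$, so every integral degree-$4$ class is Hodge), while you derive $h^{3,1}(X)=0$ from the K\"unneth decomposition of $H^4(E\times S,\mathbb{Q})^{\iota}$ together with the antisymplectic hypothesis on $g$ --- which is in fact exactly how one would check $H^2(X,\mathcal{O}_X)=0$ as well. The real divergence is in producing $\alpha$. You run the integral Leray spectral sequence of $p_{E_\xi}$, using $H^3(S,\mathbb{Z})=0$ and the triviality of the monodromy on $H^4(S,\mathbb{Z})$ to show that restriction to a fibre is surjective onto the point class; this is uniform in $E$ and purely topological. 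The paper instead first remarks that the statement is topological, hence independent of the pair $(E,\xi)$, and then makes a special choice: a $K3$ surface whose N\'eron--Severi group is $g$-invariant carries a $g$-invariant elliptic pencil, a $g$-fixed fibre of which is a smooth elliptic curve $E\subset S$ on which the fixed-point-free $g$ acts as a translation $t_\xi$; the diagonal copy of $E$ in $E\times S$ then descends to an algebraic curve $E/(t_\xi,g)\subset X$ mapping isomorphically to $E_\xi$, and $\alpha$ is taken to be its class. The paper's construction buys the extra information that for this special $E$ the index of $p_{E_\xi}$ is actually $1$ (so the index genuinely jumps in the family, consistently with Corollary \ref{coroindex} which gives index $2$ for very general $E$), whereas your argument is softer, needs no special $K3$ geometry, and makes transparent that the obstruction to index $1$ is invisible both topologically and Hodge-theoretically --- which is precisely the point of the lemma. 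Your closing comparison of $\alpha$ with $[\pi(E\times\{s_0\})]$ is accurate and matches the degree-$2$ computation in the proof of Corollary \ref{coroindex}.
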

\begin{proof} We note that, because $H^2(X,\mathcal{O}_X)=0$, any integral Betti cohomology class $\alpha\in H^4(X,\mathbb{Z})$ is Hodge. So it suffices to prove the existence of an integral Betti cohomology class $\alpha\in H^4(X,\mathbb{Z})$ such that $p_{E_\xi*}\alpha=1_{E_\xi}$ in $H^0(E_\xi,\mathbb{Z})$. This statement is topological, so it suffices to prove the result for a specific elliptic curve $E$ with involution $t_\xi$. We choose a $K3$ surface with involution $g$ over an Enriques surface satisfying the property that ${\rm NS}(S)$ is $g$-invariant. We now observe that $S$  has elliptic pencils, which must be   globally invariant under $g$. The action of $g$ on the base $\mathbb{P}^1$ of the pencil has two fixed points, and each one produces a possibly singular elliptic curve $E\subset S$, invariant under $g$. As $g$ acts without fixed  point on $E$, $E$ must be smooth and $g$ acts as an order $2$  translation  $t_\xi$ on it. Indeed, the only possibility for a singular fixed fiber $E$ would be that $E$ is the union of two smooth rational curves meeting in two points, where $g$ exchanges the two components. This however would contradict the fact that ${\rm NS}(S)$ is $g$-invariant. We now consider the diagonal inclusion of $E$ in $E\times S$. It is invariant under $(t_\xi, g)$ and we get a curve $E/ (t_\xi, g)\subset (E\times S)/(t_\xi,g)=X$, which is isomorphic to $E_\xi:=E/t_\xi$ via $p_{E_\xi}$. The class $\alpha:=[E/ (t_\xi, g)]\in H^4(X,\mathbb{Z})$ is the desired one.
\end{proof}
\subsection{On a  result of Benoist and Ottem \label{secBO}}
Our proof of Theorem \ref{theo} relied on the results of Benoist and Ottem in \cite{benoistOttem}. We give in this section a short proof and a mild generalization of  the statement in \cite{benoistOttem} that we have been using.
\begin{prop} \label{propBOwithnewproof} Let $W$ be a smooth projective complex variety of dimension $d$,  and let $0\not=\beta\in H^1(W,\mathbb{Z}/k)$ for some prime  integer $k>1$. Then there exist only countably many elliptic curves $E$, such that there exists a correspondence $\Gamma\in {\rm CH}^d(E\times \Sigma)$ with
$$[\Gamma]^*\beta\not=0\,\,{\rm in }\,\,H^1(E,\mathbb{Z}/k).$$
\end{prop}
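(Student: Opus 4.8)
The plan is to reduce the statement to a countability argument about which elliptic curves $E$ can support a nonzero class in the image of a correspondence. The key observation is that a correspondence $\Gamma\in {\rm CH}^d(E\times \Sigma)$ induces $[\Gamma]^*\colon H^1(W,\mathbb{Z}/k)\to H^1(E,\mathbb{Z}/k)$, and the condition $[\Gamma]^*\beta\neq 0$ forces a specific algebraic-geometric constraint relating $E$ to $W$. Here I would first lift the $\mathbb{Z}/k$-coefficient statement to a statement about the action on $H^1(E,\mathbb{Z})$ and the Hodge structure of $E$. The natural first step is to analyze the map $[\Gamma]^*$ via its realization on integral cohomology and to note that, since $H^1(E,\mathbb{Z}/k)$ is the reduction of $H^1(E,\mathbb{Z})$ (the latter being torsion-free), a nonzero class in $H^1(E,\mathbb{Z}/k)$ in the image of an algebraic correspondence reflects that a sub-Hodge-structure related to $W$ maps nontrivially, modulo $k$, into $H^1(E)$.

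The core idea I would pursue is that an algebraic correspondence $\Gamma$ from $E$ to $\Sigma$ with $[\Gamma]^*\beta\neq 0$ produces a morphism of Hodge structures, and hence (via the theory of abelian varieties and Jacobians of curves sitting inside $W$) a nonzero morphism from a fixed abelian variety or Hodge structure attached to $W$ to the elliptic curve $E$. Since $E$ is $1$-dimensional, such a nonzero morphism of Hodge structures severely restricts $E$: up to isogeny, $E$ must be an isogeny factor of an abelian variety attached to $W$, or more precisely $E$ must admit a nonconstant map to / from a curve inside $W$. Concretely, I would realize $\beta$ as coming from an element of $H^1(W,\mathbb{Z})$ modulo $k$ (or from a $k$-torsion class via the Bockstein), and track how $\Gamma$ transports the associated weight-$1$ Hodge-theoretic data to $E$. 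The fixed datum is $(W,\beta)$; the only free parameter is $E$, and the constraint pins $E$ down to a countable set.

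The main technical step — and the step I expect to be the principal obstacle — is upgrading the existence of $\Gamma$ acting nontrivially modulo $k$ into an actual nonzero morphism of \emph{integral} Hodge structures landing in $H^1(E)$. The difficulty is precisely that $[\Gamma]^*\beta\neq 0$ in $H^1(E,\mathbb{Z}/k)$ does not automatically give a nonzero integral class; the torsion/reduction bookkeeping is where the argument can break down, since a priori the integral lift could be zero while the reduction is not. I would handle this by observing that $\beta$ itself need not lift integrally, but its image determines a $\mathbb{Z}/k$-local system, and the correspondence $\Gamma$ induces a map of the associated (finite) étale covers; the relevant finiteness then comes from the fact that $E$ must dominate, or be dominated by, one of the finitely many components of the Stein factorization of the cover of $W$ determined by $\beta$. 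Because there are only countably many curves admitting a nonconstant morphism to a fixed variety (the relevant Hom-schemes have countably many components), and each such curve has only countably many elliptic quotients, the set of admissible $E$ is countable.

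The final step is to assemble the countability: I would argue that the set of elliptic curves $E$ for which some $\Gamma$ exists is contained in the (countable) union, over the countably many algebraic families of curves $D\to W$ compatible with $\beta$, of the elliptic curves isogenous to a factor of $J(D)$, or receiving a dominant map from such a $D$. Each piece contributes at most countably many isomorphism classes of elliptic curves, so the total is countable, which is exactly the assertion. I would conclude by remarking that this gives the generalization claimed: the statement holds for any prime $k>1$ and any $(W,\beta)$ with $0\neq\beta\in H^1(W,\mathbb{Z}/k)$, recovering the Benoist–Ottem case (where one takes $\Sigma$ the Enriques surface and $\beta=\sigma_\Sigma$) upon specialization.
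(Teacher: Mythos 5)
Your proposal does not follow the paper's route, and unfortunately the route you sketch has gaps that I do not think can be repaired along the lines you indicate.

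The paper argues by contradiction via a degeneration: if uncountably many elliptic curves $E$ admitted such a $\Gamma$, a standard spreading argument (there are only countably many algebraic families of correspondences) produces a \emph{nonisotrivial} elliptic surface $f:T\rightarrow B$ with a section and a correspondence $\Gamma\in{\rm CH}^d(T\times W)$ such that $\beta':=[\Gamma]^*\beta$ restricts nontrivially to the smooth fibers $T_b$. The class $\beta'$ defines a degree $k$ cyclic \'etale cover $e:T'\rightarrow T$ whose deck transformation acts by translations on the general fiber, hence acts trivially on ${\rm CH}_0(T')_{\mathbb{Q}}$, hence (by Mumford) trivially on $H^i(T',\mathcal{O}_{T'})$; this gives $\chi(T',\mathcal{O}_{T'})=\chi(T,\mathcal{O}_T)$, while \'etaleness gives $\chi(T',\mathcal{O}_{T'})=k\chi(T,\mathcal{O}_T)$, so $\chi(T,\mathcal{O}_T)=0$, contradicting the Kodaira canonical bundle formula for a nonisotrivial elliptic surface with a section. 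None of this appears in your proposal.

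The concrete gaps in your argument are the following. First, as you yourself suspect, $\beta$ in general does not lift to $H^1(W,\mathbb{Z})$: in the motivating example $W=\Sigma$ is an Enriques surface with $H^1(\Sigma,\mathbb{Z})=0$, and $\beta$ lives entirely in the $k$-torsion of $H^2(\Sigma,\mathbb{Z})$ via universal coefficients. So there is no weight-one integral Hodge structure attached to $(W,\beta)$ for $\Gamma$ to transport to $H^1(E,\mathbb{Z})$, and the ``$E$ is an isogeny factor of an abelian variety attached to $W$'' picture collapses. Second, your fallback -- that $\Gamma$ induces a map of the \'etale covers determined by $\beta$, forcing $E$ to dominate or be dominated by a component of the Stein factorization of the cover of $W$ -- is not available: a correspondence is not a morphism, and $[\Gamma]^*$ on $H^1(\cdot,\mathbb{Z}/k)$ is defined purely cohomologically; no geometric map between the covers is induced. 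The whole difficulty of the Benoist--Ottem statement is precisely that correspondences are far more flexible than morphisms. Third, the countability step is false as stated: ``there are only countably many curves admitting a nonconstant morphism to a fixed variety'' is wrong, since the relevant Hom- and Hilbert schemes have countably many components but those components are typically positive-dimensional, producing uncountably many isomorphism classes of curves (e.g.\ all smooth plane curves map to $\mathbb{P}^2$). The countability of the set of admissible $E$ is exactly the nontrivial content of the proposition, and it is obtained in the paper not by enumerating curves but by showing that a positive-dimensional (nonisotrivial) family of admissible $E$'s leads to the Euler-characteristic contradiction above.
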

\begin{proof} We argue by contradiction and, using standard spreading arguments, we get that there exist a smooth projective curve $B$, a smooth nonisotrivial elliptically fibered   surface $f:T\rightarrow B$ and a correspondence
$\Gamma\in {\rm CH}^d(T\times W)$, such that the class
$\beta':=[\Gamma]^*\beta\in H^1(T,\mathbb{Z}/k)$ does not vanish on the smooth fibers $T_b$ of $f$. We can furthermore assume that $f$ has a section, by performing an extra  base-change $B'\rightarrow B$ if necessary.
The class $\beta'$ induces an étale cyclic cover
$e: T'\rightarrow T$ of degree $k$,
and the map $f':=f\circ e:T'\rightarrow B$ has connected elliptic general fiber $T'_b$, obtained as the  étale degree $k$ cyclic cover of the fiber $T_b$ induced by the class $\beta'_{\mid T_b}$ (which is  nontrivial by assumption). The two surfaces satisfy
\begin{eqnarray} \label{eqhodgeequal} H^i(T,\mathcal{O}_T)\cong H^i(T',\mathcal{O}_{T'}),\,i=0,\,1,\,2.\end{eqnarray}
This follows indeed from the fact that the order $k$ automorphism  $i$ of $T'$ over $T$ acts by a finite order translation of the smooth fibers $T'_b$. It thus  acts as the identity on ${\rm CH}_0(T'_b)_\mathbb{Q}$ for a general $b$, hence also  on ${\rm CH}_0(T')_\mathbb{Q}$, which implies  by \cite{mumford} that it acts as the identity on $H^i(T',\mathcal{O}_{T'})$.
Formula (\ref{eqhodgeequal}) gives that $$\chi(T',\mathcal{O}_{T'})=\chi(T,\mathcal{O}_{T}),$$ while we also have

$$\chi(T',\mathcal{O}_{T'})=k\chi(T,\mathcal{O}_{T})$$
since $e:T'\rightarrow T$ is étale of degree $k$.
We thus conclude that $\chi(T,\mathcal{O}_{T})=0$. However the Kodaira canonical bundle formula \cite{canobun1}, \cite{canobun2} shows that a surface $T$ admitting a nonisotrivial elliptic fibration with a section cannot have  $\chi(T,\mathcal{O}_{T})=0$.
\end{proof}

Proposition \ref{propBOwithnewproof} applies for example to quintic  Godeaux surfaces, quotients of quintic surfaces $S$ in $\mathbb{P}^3$ by the  action of a certain  order $5$ automorphism $g$ acting freely. It is proved in  \cite{voisingodeaux} that such a surface $\Sigma$ has trivial ${\rm CH}_0$ group, hence the whole argument described previously would work as well, thanks to Proposition \ref{propBOwithnewproof}, for a general quotient $X=(E\times S)/(t_\xi,g)$, where $E$ is very general and  $t_\xi$ is a translation by a point of order $5$. Such an $X$ has representable ${\rm CH}_0$ group and has no universal $0$-cycle.

\section{The case of surfaces\label{secsurfcaes} }
A ruled surface, and more generally a rationally connected fibration over a curve,  admits a universal $0$-cycle, thanks to Lemma \ref{coroaprouver} (see also  \cite{colliotTh}, \cite{voisinArbarello}). We also have the following.
\begin{prop} \label{propsourface} Let $S$ be a smooth projective surface with $p_g(S)=0$ and $q(S)\not=0$.  Assume the morphism
$$\phi_S:S\rightarrow C,\,J(C)={\rm Alb}(S)$$ given by Lemma \ref{lestructure} has the property that the class
$[S_c]\in H^2(S,\mathbb{Z})_{\rm tf}$ of the fiber $S_c=\phi_S^{-1}(c)$ is primitive. Then the index of $\phi_S$ is $1$ and  $S$ admits a universal $0$-cycle.
\end{prop}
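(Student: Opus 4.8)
The plan is to reduce the statement, via Proposition \ref{coroaprouver}, to showing that the index of $\phi_S$ equals $1$: since $\phi_{S*}:{\rm Alb}(S)\to J(C)$ is an isomorphism by Lemma \ref{lestructure}, once the index is $1$ Proposition \ref{coroaprouver} immediately furnishes a universal $0$-cycle. By definition the index of $\phi_S$ is the gcd of the degrees $\deg(Z/C)$ of effective irreducible curves $Z\subset S$ dominating $C$, and such a degree is computed cohomologically as the intersection number $[Z]\cdot[S_c]$ with the fiber class. So the first step is to identify this gcd with $\gcd_{\alpha\in {\rm NS}(S)}\langle \alpha,[S_c]\rangle$, the positive generator of the image of the homomorphism ${\rm NS}(S)\to\mathbb{Z}$, $\alpha\mapsto \langle\alpha,[S_c]\rangle$.

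The decisive input is that $p_g(S)=0$ forces $H^{2,0}(S)=0$, so every integral degree-$2$ class is of Hodge type $(1,1)$ and, by the Lefschetz $(1,1)$-theorem, algebraic. Thus ${\rm NS}(S)$ fills up all of $H^2(S,\mathbb{Z})_{\rm tf}$. Since the cup-product pairing on $H^2(S,\mathbb{Z})_{\rm tf}$ is unimodular by Poincar\'{e} duality, and $[S_c]$ is assumed primitive there, I would invoke the elementary fact that a primitive vector $v$ in a unimodular lattice extends to a $\mathbb{Z}$-basis, whose dual basis again lies in the lattice, producing a $w$ with $\langle v,w\rangle=1$. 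Applied here this yields $\alpha\in H^2(S,\mathbb{Z})_{\rm tf}={\rm NS}(S)$ with $\langle\alpha,[S_c]\rangle=1$. Hence $\gcd_{\alpha}\langle\alpha,[S_c]\rangle=1$, and combined with the first step the index of $\phi_S$ is $1$.

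The step that needs care, and that I expect to be the main obstacle, is the identification of the two gcd's, because the index is defined using effective irreducible multisections whereas the pairing ranges over the whole Néron-Severi group. One inclusion is immediate: every effective curve gives a class in ${\rm NS}(S)$, so $\gcd_{\alpha}\langle\alpha,[S_c]\rangle$ divides the index. For the reverse, I would write any class $\alpha\in{\rm NS}(S)$ as a difference $[A_1]-[A_2]$ of very ample classes; by Bertini a general member of each $|A_i|$ is a smooth irreducible curve, and since $A_i$ is ample it meets every fiber and so dominates $C$ with $\deg(A_i/C)=A_i\cdot S_c$. Thus $\langle\alpha,[S_c]\rangle$ is a difference of two multisection degrees, whence the index divides it. This gives the reverse divisibility and hence the equality of the two gcd's, completing the proof that the index is $1$; the existence of a universal $0$-cycle then follows from Proposition \ref{coroaprouver}.
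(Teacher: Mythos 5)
Your proof is correct and follows essentially the same route as the paper: reduce to the index statement via Proposition \ref{coroaprouver}, use $p_g=0$ and the Lefschetz $(1,1)$-theorem to identify ${\rm NS}(S)$ with $H^2(S,\mathbb{Z})_{\rm tf}$, and use unimodularity of the intersection form together with primitivity of $[S_c]$ to produce a divisor class of fiber degree $1$. The only difference is that you carefully justify the passage from a (possibly non-effective) divisor class of fiber degree $1$ to the statement that the index of $\phi_S$ is $1$, by writing classes as differences of very ample ones; the paper leaves this step implicit.
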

\begin{proof} By Proposition \ref{coroaprouver}, the first statement implies the second one.  Let us now prove that under our assumptions, the index of $\phi_S$ is $1$.  As the class $[S_c]$ is primitive, Poincar\'{e} duality on $H^2(S,\mathbb{Z})$ tells that there exists a class $\gamma\in H^2(S,\mathbb{Z})$ such that $\langle \gamma,[S_c]\rangle=1$. As $p_g(S)=0$, the  theorem on $(1,1)$-classes tells that $H^2(S,\mathbb{Z})\cong {\rm NS}(S)$, hence $\gamma=[D]$ for some divisor class $D \in {\rm NS}(S)$. Thus $D$ has degree $1$ along the fibers of $\phi_S$.
\end{proof}
We now consider the non-uniruled case and  first recall the following result due to Beauville \cite{beauville}.
\begin{theo} \label{theobeau} \cite[Proposition 11]{beauville} Let $S$ be a non-uniruled smooth projective surface such that $p_g(S)=0,\,q(S)\not=0$. Then $S$ is birational to a quotient $(C_1\times C_2)/G$, where $G$ is a finite group acting by automorphisms of both $C_1$ and $C_2$, without fixed points on $C_1\times C_2$. Furthermore, at least one of the two curves is elliptic, and we have (up to exchanging $C_1$ and $C_2$)
$$ C_1/G\cong E,\,\,C_2/G\cong \mathbb{P}^1,$$
where $E$ is elliptic and ${\rm Alb}(S)=J(E)\cong E$.
\end{theo}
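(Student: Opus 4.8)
The plan is to combine the numerical constraints of surface classification with a semipositivity/isotriviality argument for the Albanese fibration. First I would reduce to the case where $S$ is minimal, which is harmless since $p_g$, $q$ and the birational type are unchanged. As $S$ is non-uniruled its Kodaira dimension is $\geq 0$, so by the Enriques--Kodaira classification its minimal model satisfies $\chi(\mathcal{O}_S)=1-q+p_g\geq 0$; with $p_g=0$ this reads $\chi(\mathcal{O}_S)=1-q\geq 0$, forcing $q=1$ and $\chi(\mathcal{O}_S)=0$. Since $h^{2,0}(S)=p_g=0$, Lemma \ref{lestructure} applies and the Albanese map factors as $\phi_S\colon S\rightarrow C$ with $J(C)\cong {\rm Alb}(S)$; as $g(C)=q=1$, the curve $C=E$ is elliptic, $J(E)\cong E$, and $\phi_S\colon S\rightarrow E$ is a fibration with connected fibres. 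Let $F$ denote its general fibre.

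Next I would extract isotriviality of $\phi_S$ from $\chi(\mathcal{O}_S)=0$. Since $g(E)=1$, the Leray spectral sequence for $\phi_S$ and $\mathcal{O}_S$ gives $\chi(\mathcal{O}_S)=\chi(E,\mathcal{O}_E)-\chi(E,R^1\phi_{S*}\mathcal{O}_S)=-\deg(R^1\phi_{S*}\mathcal{O}_S)$; as $R^1\phi_{S*}\mathcal{O}_S\cong(\phi_{S*}\omega_{S/E})^\vee$ by relative duality, we obtain $\deg(\phi_{S*}\omega_{S/E})=\chi(\mathcal{O}_S)=0$. The fibre $F$ has genus $\geq 1$, since genus $0$ would make $S$ ruled and hence uniruled. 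I can then invoke the semipositivity theorem of Fujita together with Arakelov's theorem (for $g(F)\geq 2$) and Kodaira's theory of elliptic surfaces (for $g(F)=1$): a fibration of curves over a curve with vanishing degree of the Hodge bundle $\phi_{S*}\omega_{S/E}$ is isotrivial. Thus all smooth fibres of $\phi_S$ are isomorphic to a fixed curve.

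From isotriviality I would invoke the structure theorem for isotrivially fibred surfaces (see Serrano and Beauville): up to birational modification $S$ is a quotient $(C_1\times C_2)/G$, where $G$ is a finite group acting by automorphisms of each factor and without fixed points on the product $C_1\times C_2$, with $C_1/G=E$ and $C_2=F$. Producing this \emph{free} action on the product---rather than a quotient with mild cyclic singularities arising from the multiple and special fibres of $\phi_S$---is the step I expect to be the main obstacle, as it requires a careful analysis of the ramification of the base change $C_1\rightarrow E$ that trivialises the monodromy and a comparison of the resulting quotient with the minimal model of $S$.

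Finally I would pin down the two quotients numerically. Since the action is free, the quotient map is étale of degree $|G|$, so $\chi(\mathcal{O}_S)=\chi(\mathcal{O}_{C_1\times C_2})/|G|=(1-g(C_1))(1-g(C_2))/|G|$, and $\chi(\mathcal{O}_S)=0$ forces $g(C_1)=1$ or $g(C_2)=1$; thus at least one of the two curves is elliptic (non-uniruledness already excludes $g(C_i)=0$, which would exhibit a $\mathbb{P}^1$-factor and make $S$ ruled). Likewise, freeness together with the Künneth formula gives $q(S)=\dim H^1(C_1,\mathcal{O})^G+\dim H^1(C_2,\mathcal{O})^G=g(C_1/G)+g(C_2/G)$, so $q=1$ forces the two base genera to be $\{0,1\}$. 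The genus-$1$ quotient is $E$ and the genus-$0$ quotient is $\mathbb{P}^1$, giving, after possibly exchanging the factors, $C_1/G\cong E$ and $C_2/G\cong\mathbb{P}^1$ with ${\rm Alb}(S)=J(E)\cong E$, as claimed.
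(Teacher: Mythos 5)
The paper itself offers no proof of this statement: it is quoted directly from Beauville's CIME notes as \cite[Proposition 11]{beauville}, so your attempt can only be compared with Beauville's own argument. Your overall architecture is close to his and largely sound: minimality together with $\chi(\mathcal{O}_S)\geq 0$ for non-ruled surfaces gives $q=1$ and $\chi(\mathcal{O}_S)=0$; the Albanese map is a fibration over an elliptic curve $E$ with fibre $F$ of genus $\geq 1$; the computation $\deg\phi_{S*}\omega_{S/E}=\chi(\mathcal{O}_S)=0$ via Leray and relative duality is correct; and the final numerology ($\chi$ and $q$ of the quotient) does pin down the genera \emph{once} a free action on a product is in hand.

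The genuine gap is exactly the one you flag, and it is not a removable technicality. Isotriviality alone (Arakelov/Fujita from $\deg\phi_{S*}\omega_{S/E}=0$) only yields, via the Serrano-type structure theorem, that $S$ is birational to $(F\times B')/G$ for a \emph{possibly non-free} diagonal action; the quotient may acquire singularities over the ramification of $B'\to E$, and then neither $\chi(\mathcal{O}_{(C_1\times C_2)/G})=\chi(\mathcal{O}_{C_1\times C_2})/|G|$ nor the exclusion of extraneous rational curves is available, so your concluding computation presupposes the freeness it is meant to help establish. The missing ingredient is Noether's formula: $12\chi(\mathcal{O}_S)=K_S^2+c_2(S)$ together with $K_S^2\geq 0$ and $c_2(S)\geq 0$ for a minimal non-ruled surface forces $c_2(S)=e(S)=0$; since $e(E)=0$, the Euler-number formula for fibrations, $e(S)=e(E)e(F)+\sum_s\bigl(e(F_s)-e(F)\bigr)$ with every summand nonnegative, then shows that every fibre of $\phi_S$ is smooth (or, when $F$ is elliptic, a multiple fibre with smooth reduction). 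Hence $\phi_S$ is a locally trivial fibre bundle, the Galois cover $B'\to E$ trivializing its monodromy is \'{e}tale, and $G$ acts freely on $B'$ and therefore on $C_1\times C_2=B'\times F$; this is what legitimizes your last paragraph. A secondary weak point: for $g(F)=1$ the monodromy argument does not apply as stated ($\mathrm{Aut}$ of an elliptic curve is infinite and multiple fibres may occur), which is why the $\kappa=0$ case is usually handled by Bagnera--de Franchis and the $\kappa=1$ case via the Iitaka elliptic fibration; but the $e(S)=0$ step is the decisive one missing from your outline.
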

\begin{rema}{\rm We can assume that $G$ acts faithfully on both $C_1$ and $C_2$, because otherwise, denoting by
$H$ the kernel of morphism $G\rightarrow {\rm Aut}(C_1)$, we can replace $C_2$ by $C_2/H$ and $\overline{G}:=G/H$ acts on $C_1$ and $C'_2$ with $(C_1\times C_2)/G\cong (C_1\times C_2/H)/\overline{G}$. Assuming the faithfulness condition, if $C_2$ is not elliptic, then the group $G$ is commutative, because $C_1$ is elliptic and $C_1/G$ is also elliptic.}
\end{rema}
We now have
\begin{prop}\label{prosurfcascyclique} Assume $S$ is  as above and the group $G$ is cyclic. Then the index of $\phi_S$ is equal to $1$, hence $S$ has a universal $0$-cycle.
\end{prop}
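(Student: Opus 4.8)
The plan is to use the explicit model provided by Theorem \ref{theobeau} and to reduce the computation of the index to a statement about the inertia subgroups of the Galois cover $C_2\rightarrow \mathbb{P}^1$. By Theorem \ref{theobeau} and the subsequent remark, I may assume that $S$ is birational to $Y:=(C_1\times C_2)/G$, with $G$ cyclic acting faithfully on each factor, $C_1/G\cong E$ elliptic, $C_2/G\cong \mathbb{P}^1$, and ${\rm Alb}(S)\cong E$. In particular the curve $C$ furnished by Lemma \ref{lestructure} has genus $1$ with $J(C)\cong E$, so $C\cong E$ and $\phi_S$ is the Albanese-induced map $S\rightarrow E$. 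Since $G$ acts without fixed points on $C_1\times C_2$, the surface $Y$ is smooth, and as the index is a birational invariant of a fibration (strict transforms of multisections dominating the base have unchanged degree over it), it suffices to compute the index of the morphism $\psi:Y\rightarrow C_1/G=E$ induced by the first projection. This $\psi$ is, up to translation, the Albanese map of $Y$, hence corresponds under the birational identification to $\phi_S$, and $\psi_*:{\rm Alb}(Y)\rightarrow J(E)$ is the required isomorphism.

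First I would produce an explicit supply of multisections. For a point $c_2\in C_2$ with stabilizer $G_{c_2}\le G$, the image $D_{c_2}$ of $C_1\times\{c_2\}$ in $Y$ is the irreducible curve $C_1/G_{c_2}$, and $\psi$ restricts on it to the quotient map $C_1/G_{c_2}\rightarrow C_1/G$, which has degree $[G:G_{c_2}]$ because $G$ acts faithfully on $C_1$. Thus each $D_{c_2}$ is a multisection of $\psi$ of degree $[G:G_{c_2}]$, and the index of $\psi$ divides $\gcd_{c_2}[G:G_{c_2}]$.

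Next I would let $c_2$ range over the ramification points of $C_2\rightarrow C_2/G=\mathbb{P}^1$, so that the groups $G_{c_2}$ are exactly the inertia subgroups $H_1,\dots,H_r$ of this Galois cover. The key input is that these inertia subgroups generate $G$: the quotient of $C_2$ by the subgroup $N$ they generate would be an everywhere-unramified cover of $\mathbb{P}^1$ (all the ramification has been divided out), hence trivial by simple connectedness of $\mathbb{P}^1$, forcing $N=G$. It then remains to check that, for cyclic $G$, the generation $\langle H_1,\dots,H_r\rangle=G$ forces $\gcd_i[G:H_i]=1$. Writing $G=\mathbb{Z}/n$ and $H_i=\langle a_i\rangle$, one has $[G:H_i]=\gcd(a_i,n)$, while generation means $\gcd(a_1,\dots,a_r,n)=1$; since $\gcd_i\gcd(a_i,n)=\gcd(a_1,\dots,a_r,n)$, the common divisor $\gcd_i[G:H_i]$ equals $1$. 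Hence the multisections $D_{c_2}$ over the branch points have degrees with greatest common divisor $1$, so the index of $\psi$, and therefore of $\phi_S$, equals $1$. The universal $0$-cycle then follows from Proposition \ref{coroaprouver}.

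I expect the main obstacle to be conceptual rather than computational: recognizing that the index is controlled by the inertia subgroups through the horizontal curves $D_{c_2}$, and that cyclicity is precisely what yields $\gcd_i[G:H_i]=1$. For a general abelian group this can fail: for $G=(\mathbb{Z}/2)^2$ realized with the three order-two inertia subgroups, each has index $2$ and their gcd is $2$, so the same construction would only bound the index by $2$. Thus the hypothesis that $G$ is cyclic is used essentially. The reduction to the faithful smooth model $Y$ and the birational invariance of the index are routine and should be dispatched quickly.
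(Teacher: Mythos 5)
Your proof is correct and follows essentially the same route as the paper: both produce the horizontal curves $(C_1\times\{c_2\})/G_{c_2}$ over the branch points of $C_2\rightarrow\mathbb{P}^1$ and show that the gcd of their degrees $[G:G_{c_2}]$ over $E$ is $1$ when $G$ is cyclic. The only (cosmetic) difference is that the paper extracts the relation $\gcd(d,a_1,\dots,a_k)=1$ from the explicit affine equation $u^d=\prod_i(t-t_i)^{a_i}$ of an irreducible cyclic cover, whereas you derive the equivalent fact that the inertia subgroups generate $G$ from the simple connectedness of $\mathbb{P}^1$; your explicit treatment of the birational invariance of the index is a point the paper leaves implicit.
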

\begin{proof} Again the second statement follows from the first. Let $G$ be cyclic generated by $g$, and assume the order of $g$ is $d$. Using the notation of Theorem \ref{theobeau}, we know that
$C_2/G\cong\mathbb{P}^1$. An irreducible degree $d$ cyclic cover of $\mathbb{P}^1$  has an affine version with  equation
$$ u^d=(t-t_1)^{a_1}\ldots (t-t_k)^{a_k},$$
where the gcd of $\{d,\,a_i,\,i=1,\ldots,\,k\}$ is $1$ since otherwise the curve is not irreducible. Let $r_i:={\rm gcd}(d,a_i)$. After normalization, the ramification order of $C_2\rightarrow \mathbb{P}^1$ over $t_i$ is $d/r_i$ and $d/r_i$ is the order of
the subgroup $G_i\subset G$ fixing any point $c_i\in C_2$ over $t_i\in \mathbb{P}^1$. The surface $S$ contains the quotient $(C_1\times \{c_i\})/G_i$, which has degree $r_i$ over $C_1/G$. As the gcd of the  set $\{r_i\}$  is $1$, we conclude that the index of $\phi_S$ is $1$.
\end{proof}

\end{document}